\documentclass[11pt]{amsart}   	
\usepackage{geometry}      
\usepackage[dvipsnames]{xcolor}   		             		
\usepackage{graphicx}				
\usepackage{amssymb,mathrsfs}
\usepackage{amsthm,stmaryrd}
\usepackage[leqno]{amsmath}
\usepackage{tikz}
\usepackage{tikz-cd}
\usepackage{accents,upgreek,enumerate}
\usepackage[headings]{fullpage}
\usepackage{bm}
\usepackage{mathtools}
\usepackage[all]{xy}
\usepackage{caption}
\usepackage{comment}
\usepackage{tabu}

\usepackage{colonequals}

\tikzset{
  commutative diagrams/.cd, 
  arrow style=tikz, 
  diagrams={>=stealth}
}


\newenvironment{customthm}[1]
  {\innercustomthm}
  {\endinnercustomthm}
  
  \makeatletter
\def\@tocline#1#2#3#4#5#6#7{\relax
  \ifnum #1>\c@tocdepth 
  \else
    \par \addpenalty\@secpenalty\addvspace{#2}%
    \begingroup \hyphenpenalty\@M
    \@ifempty{#4}{%
      \@tempdima\csname r@tocindent\number#1\endcsname\relax
    }{%
      \@tempdima#4\relax
    }%
    \parindent\z@ \leftskip#3\relax \advance\leftskip\@tempdima\relax
    \rightskip\@pnumwidth plus4em \parfillskip-\@pnumwidth
    #5\leavevmode\hskip-\@tempdima
      \ifcase #1
       \or\or \hskip 1em \or \hskip 2em \else \hskip 3em \fi%
      #6\nobreak\relax
    \dotfill\hbox to\@pnumwidth{\@tocpagenum{#7}}\par
    \nobreak
    \endgroup
  \fi}
\makeatother

\usetikzlibrary{calc}
\usetikzlibrary{fadings}
\usetikzlibrary{decorations.pathmorphing}
\usetikzlibrary{decorations.pathreplacing}

\newcounter{marginnote}
\setcounter{marginnote}{0}

\setlength{\parskip}{4pt}

\DeclareMathAlphabet{\mathpzc}{OT1}{pzc}{m}{it}

\usepackage[backref=page]{hyperref}
\hypersetup{
  colorlinks   = true,          
  urlcolor     = violet,          
  linkcolor    = blue,          
  citecolor   = violet             
}

\theoremstyle{definition}

\newtheorem{theorem}{Theorem}[section]
\newtheorem{corollary}[theorem]{Corollary}
\newtheorem{lemma}[theorem]{Lemma}
\newtheorem{proposition}[theorem]{Proposition}

\newtheorem{quasi-theorem}[theorem]{Quasi-Theorem}

\theoremstyle{definition}
\newtheorem{definition}[theorem]{Definition}

\newtheorem{problem}[theorem]{Problem}

\newtheorem{example}[theorem]{Example}
\newtheorem{blank remark}[theorem]{}
\newtheorem{remark}[theorem]{Remark}

\newtheorem{not1}[theorem]{Notation}

\newcommand{\pt}{\operatorname{pt}}


\newcommand{\PP}{\mathbb{P}}         
\newcommand{\QQ} {{\mathbb Q}}		
\newcommand{\RR} {{\mathbb R}}

\def\R{\mathrm{R}}


\newcommand{\Star}{\operatorname{Star}}

\newcommand{\Hom}{\operatorname{Hom}}

\newcommand{\ev}{\operatorname{ev}}





\newcommand{\cal}{\mathcal}

\def\cM{{\cal M}}


\newcommand{\sqC}{\scalebox{0.75}[1.2]{$\sqsubset$}}



\newcommand{\Kup}{\mathsf{K}}
\newcommand{\Tup}{\mathsf{T}}



\newcommand{\Mbar}{\overline{\cM}\vphantom{\cM}}



\newcommand{\OO}{\mathcal{O}}


\newcommand{\virt}{\operatorname{vir}}




\makeatletter
\def\blfootnote{\xdef\@thefnmark{}\@footnotetext}
\makeatother

\title{Gromov--Witten theory with maximal contacts}

\date{}

\author{Navid Nabijou  {\it \&} Dhruv Ranganathan}

\address{Department of Pure Mathematics {\it \&} Mathematical Statistics\\
University of Cambridge}
\email{N.N. \href{mailto:nn333@cam.ac.uk}{nn333@cam.ac.uk}}
\email{D.R. \href{mailto:dr508@cam.ac.uk}{dr508@cam.ac.uk}}

\vspace{-0.2in}

\begin{document}

\begin{abstract}
We propose an intersection-theoretic method to reduce questions in genus zero logarithmic Gromov--Witten theory to questions in the Gromov--Witten theory of smooth pairs, in the presence of positivity. The method is applied to the enumerative geometry of rational curves with maximal contact orders along a simple normal crossings divisor and to recent questions about its relationship to local curve counting. Three results are established. We produce counterexamples to the local-logarithmic conjectures of van Garrel--Graber--Ruddat and Tseng--You. We prove that a weak form of the conjecture holds for product geometries. Finally, we explicitly determine the difference between local and logarithmic theories, in terms of relative invariants for which efficient algorithms are known. The polyhedral geometry of the tropical moduli of maps plays an essential and intricate role in the analysis. 
\end{abstract}

\maketitle
\setcounter{tocdepth}{1}
\tableofcontents

\section*{Introduction}
\noindent Let $X$ be a smooth projective variety and $D$ a simple normal crossings divisor whose irreducible components $D_1,\ldots,D_k$ are hyperplane sections, hereafter \textit{section pairs}.  We examine three genus zero Gromov--Witten theories: (1) the logarithmic theory of $(X|D)$, (2) the {naive} logarithmic theory of $(X|D)$ constructed out of the relative theories of $(X|D_i)$, and (3) the local theory of the direct sum of the $\mathcal O_X(-D_i)$. The first two encode rational curves in $X$ with tangency conditions along $D$. The local theory models rational curves in a rigid embedding of $X$ in an ambient variety with split normal bundle $\oplus_{i=1}^k \mathcal O_X(-D_i)$. 

The \textit{naive theory} is defined as follows. First let $X=\PP$ be a product of $k$ projective spaces, with $H_i \subseteq \PP$ the pullback of a hyperplane from the $i$th factor and $H=\Sigma_{i=1}^k H_i$. The \emph{naive space} is defined as the fibre product of stacks:
\begin{equation*}
\begin{tikzcd}
\mathsf{N}(\PP|H) \ar[r] \ar[d] \ar[rd,phantom,"\square"] & \prod_{i=1}^k \Kup(\PP|H_i) \ar[d] \\
\Kup(\PP) \ar[r,hook,"\Delta"] & \prod_{i=1}^k \Kup(\PP).
\end{tikzcd}
\end{equation*}
The moduli space $\Kup(\PP)$ is smooth and so $\Delta$ is a regular embedding. We obtain a virtual class on $\mathsf{N}(\PP|H)$ by pullback:
\begin{equation*} [\mathsf{N}(\PP|H)]^{\virt} \colonequals \Delta^! \left( \prod_{i=1}^k [\Kup(\PP|H_i)] \right). \end{equation*}
 The pushforward to $\Kup(\PP)$ is simply the product of the classes $[\Kup(\PP|H_i)]$. Virtual pullback defines the theory for arbitrary section pairs $(X|D)$, see \S\ref{sec: virtual pullback}.\footnote{A fourth possibility is the multi-root theory of $(X|D)$ constructed by Tseng--You~\cite{TY20}. This coincides with the naive theory for section pairs, see \cite[Corollary 2.2]{BNTY}.}

\subsection{Correspondences} If $D$ is smooth, logarithmic Gromov--Witten theory coincides with the relative theory for all tangency orders. If the tangency with $D$ is maximal, it coincides with the local theory by a result of van Garrel--Graber--Ruddat~\cite{vGGR}, following Takahashi and Gathmann~\cite{Tak01,GathmannRelative}.

\noindent
{\bf The local-logarithmic correspondences.} {\it Let $X$ be smooth and projective with simple normal crossings divisor $D$ with smooth nef components $D_1,\ldots, D_k$. Let $\beta$ be a curve class and suppose $d_i \colonequals D_i \cdot \beta >0$ for all $i$. Consider the moduli of logarithmic stable maps $\Kup^{\max}_{0,k}(X|D,\beta)$ with maximal contact with each component at distinct points. 

\noindent
\underline{\smash{Strong form:}} There is an equality of homology classes on the Kontsevich space $\Kup_{0,k}(X,\beta)$ of $k$-pointed stable maps to $X$, suppressing the relevant pushforwards, given by:
\[
[\Kup^{\mathrm{max}}_{0,k}(X|D,\beta)]^{\mathrm{vir}} =  \prod_{i=1}^k (-1)^{d_i+1} \mathsf{ev}_i^\star D_i \cdot [\Kup_{0,k}(\oplus_{i=1}^k \OO_X(-D_i),\beta)]^{\mathrm{vir}}.
\]

\noindent
\underline{\smash{Original form:}} The equality above holds after pushing forward to the Kontsevich space $\Kup_{0,0}(X,\beta)$ of unpointed stable maps to $X$, that is:
\[
[\Kup^{\mathrm{max}}_{0,k}(X|D,\beta)]^{\mathrm{vir}} =  \prod_{i=1}^k (-1)^{d_i+1} d_i \cdot [\Kup_{0,0}(\oplus_{i=1}^k \OO_X(-D_i),\beta)]^{\mathrm{vir}}.
\]
The pushforward on the left hand side is suppressed, while the right hand is a naturally a class on the Kontsevich space. 
}

The original form was conjectured by van Garrel--Graber--Ruddat~\cite{vGGR}. Fan--Wu and Tseng--You observed that if $D$ is smooth, the original proof yields the strong form. The general strong form was conjectured by Tseng--You~\cite{TY20}. The conjecture holds in numerical form in many cases~\cite{BBvG,BBvG2}. 

We refer to the local theory class cut down by the divisorial evaluations as appear in the strong form above as the \textit{evaluation local theory} of $(X|D)$. The following observation is elementary. 

\noindent
{\bf Observation.} {\it The evaluation local theory of $(X|D)$ coincides up to sign with the naive theory of $(X|D)$. After pushforward to $\Kup_{0,0}(X,\beta)$, the naive theory coincides up to explicit multiplicity with the local theory of $(X|D)$ as a homology class on $\Kup_{0,0}(X,\beta)$.}

With this observation at hand, we dispense with local Gromov--Witten theory and focus on the more general question of when the logarithmic and naive theories coincide. 

\subsection{Results} Our first result proves that the naive theory does not coincide with the logarithmic theory, giving counterexamples to both forms of the local-logarithmic correspondence. 

\begin{customthm}{X}\label{thm: counterexample}
The naive and logarithmic maximal contact theories do not coincide in degree $2$ for $\PP^2$ equipped with the divisor consisting of $2$ lines. The strong form of the correspondence fails for this geometry in degree $2$. The original form of the correspondence fails in degree $4$.
\end{customthm}

The result is proved by direct geometric analysis. The proof gives a basic flavour of the naive theory and implies that the naive theory is not even enumerative in genus zero for logarithmically convex pairs. The difference between the theories is controlled by the following result, see \S\S \ref{sec: subdivisions}--\ref{sec: blowup formula} and in particular Theorem~\ref{thm: products comparison}.

\begin{customthm}{Y}\label{thm: ample-conjecture}
The difference between the logarithmic and local/naive maximal contact Gromov--Witten invariants of a section pair $(X|D)$ is determined algorithmically in terms of tautological integrals on the moduli space of stable maps to $X$. 
\end{customthm}

Numerical consequences can be extracted. For example, the logarithmic theory of $\PP^2$ relative to two lines can be computed with primary insertions in degree up to $4$, as the corrections will not contribute, see Remark~\ref{rem: deg-4}. A systematic study will appear in future work.

The result is a consequence of the following much stronger result, which implies that the difference between the two theories is captured by Chern classes of tautological bundles, Segre classes of boundary strata in the moduli space of relative maps, and descendent integrals thereon.

Let $\PP$ denote a product of $k$ projective spaces and $H$ a divisor that is a union of hyperplanes $H_1,\ldots, H_k$ pulled back from each factor. 

\begin{customthm}{Z}\label{thm: convex-conjecture}
Let $\Kup^{\mathrm{max}}_{0,k}(\PP|H_i,\beta)$ be the space of logarithmic stable maps to $\PP$ that are maximally tangent to $H_i$ at the $i$th marked point. There is an explicit sequence of weighted blowups of the Kontsevich space 
\[
\Kup_{0,k}(\PP,\beta)^\dagger\to \Kup_{0,k}(\PP,\beta)
\]
along smooth centres such that, denoting strict transforms as $\Kup^{\mathrm{max}}_{0,k}(\PP|H_i,\beta)^\dagger$ and suppressing pushforwards, there is an equality of cycles in the Chow group of $\Kup_{0,k}(\PP,\beta)$:
\[
[\Kup^{\mathrm{max}}_{0,k}(\PP|H_1,\beta)^\dagger]\cdots [\Kup^{\mathrm{max}}_{0,k}(\PP|H_k,\beta)^\dagger] = [\Kup^{\mathrm{max}}_{0,k}(\PP|H,\beta)].
\]
\end{customthm}

Blowups of moduli spaces have appeared in recent work on logarithmic Gromov--Witten theory~\cite{R19,R19b}. The theorem above is considerably stronger. The birational modifications in those papers are not made explicit, while the result above is completely algorithmic, without arbitrary choices. The combinatorics of the maximal contacts situation is leveraged heavily. A reader will find that the combinatorial arguments, manipulating the cone stack of tropical maps, are delicate. These arguments are crucial in deducing structure results for the birational models of stable maps spaces. Outside the maximal contact setup, a sequence of blowups exists but cannot be made explicit. The utility of a general systematic description is likely to be high. In particular, we are not aware of any other methods that calculate the set of invariants that our algorithm calculates. 

Insights from this analysis lead to a new range of cases where the correspondence  holds, see \S\ref{sec: product-section}. These are not covered by the existing literature. 

\begin{customthm}{W}\label{thm: product-conjecture}
Let $X_1,\ldots, X_k$ be smooth, equipped with smooth hyperplane sections $D_1,\ldots, D_k$. The local-logarithmic-naive correspondence holds for the pair $(\prod X_i|\sum D_i)$ with primary factorwise insertions.
\end{customthm}
The condition ``primary factorwise insertions'' is explained in \S \ref{sec: primary factorwise insertions}. It includes in particular all primary invariants with three markings or fewer. These provide the first non-toric examples of the numerical correspondence in dimension larger than $2$. Numerical consequences may again be extracted: invariants of the pair $(\PP^3 \times \PP^2|K3+E)$ where $K3$ and $E$ are a quartic and a cubic, can be computed by~\cite{KlemmP08,Ga03}.

\subsection{Rank reduction and further questions} The local-logarithmic correspondence is one among a number of beautiful results in the relative Gromov--Witten theory of a smooth pair, starting with Gathmann's striking work~\cite{GathmannRelative}. In simple normal crossings geometries, results are much harder to come by and the analogue of Gathmann's recursion is not known. The difficulty of working with the invariants is visible in the degeneration formalism~\cite{ACGS17,R19}.

An approach to our results via degeneration appears to be difficult, at least to the authors. We chart a ``pure thought'' alternative for reducing questions about the geometry of logarithmic stable map spaces to the case of smooth pairs, and implement it completely in the maximal contacts case. The method is restricted to genus zero invariants satisfying a positivity assumption, but even these invariants have not been computed by other methods, not even in principle. Moreover many important phenomena, such as the failure of the local-logarithmic correspondence, appear already in this setting.

Our technique geometrises a categorical insight of Abramovich--Chen~\cite{AbramovichChenLog}. 
Given a logarithmic curve in $(X|D)$, one obtains a logarithmic curve in the smooth pairs $(X|D_i)$ by forgetting the logarithmic structure away from $D_i$. A naive expectation is that the intersection of these loci recovers the locus of logarithmic maps to $(X|D)$. This expectation fails, but is corrected by blowing up the moduli of maps to $X$. The intersection of strict transforms recovers the space of logarithmic maps to $(X|D)$. Tropical geometry informs the blowups used to correct the intersection.

We open two directions for future work. 

\begin{problem}[Moduli factorisation]
For fixed contact order data $\Gamma$, determine an efficient and explicit sequence of blowups at smooth centres $\Kup_{0,k}(\PP,\beta)^\dagger\to \Kup_{0,k}(\PP,\beta)$ such that the strict transform of $\Kup_\Gamma(\PP|H)\to \Kup_{0,k}(\PP,\beta)$ along the blowup is transverse to the strata. 
\end{problem}

A solution would generalise the combinatorics in this paper. It dovetails with the following. For fixed contact orders $\Gamma$ there is a cycle $\Kup_\Gamma(\PP|H)\to \Kup_{0,k}(\PP,\beta)$ in the space of stable maps. For any sufficiently fine logarithmic blowup of the codomain $\Kup_{0,k}(\PP,\beta)^\dagger\to \Kup_{0,k}(\PP,\beta)$, the strict transform of $\Kup_\Gamma(\PP|H)$ is transverse to the boundary of $\Kup_{0,k}(\PP,\beta)^\dagger$. We refer to this as the \textit{transverse class}.

\begin{problem}[The transverse class]
Determine an expression, in terms of tautological classes, for the transverse relative Gromov--Witten class in any sufficiently fine blowup $\Kup_{0,k}(\PP,\beta)^\dagger$ of the moduli space of stable maps.
\end{problem}

We do not extract a closed form in the maximal contacts case; our expression is algorithmic, not closed. A solution to this question would determine the genus zero Gromov--Witten theory of all section pairs, which is beyond the present state of the art, and complete a parallel to~\cite{GathmannRelative}.

To conclude this introduction, we note that an important recent development in the subject has been the development of an approach to Gromov--Witten theory relative to reducible divisors by means of orbifold structures, by Tseng--You~\cite{TY20}. Our naive theory for section pairs coincide with this orbifold theory, see \cite[Corollary 2.2]{BNTY}. Admitting this equality, our framework explains in simple geometric terms why the new orbifold theory does not coincide with the logarthmic theory. 

\subsection*{Comparison with v1}
An earlier version of this paper incorrectly claimed a positive answer to the local-logarithmic conjecture for section pairs. The error was wrongly deducing that the strict transform of the relative Gromov--Witten class was equal to the total transform, which occurred via a misapplication of the vanishing results in~\cite{vGGR}. The technique has been refined in this version, but the basic geometric strategy remains the same.

\subsection*{Acknowledgements} We are grateful to M. van Garrel for encouragement. We have benefited from conversations with D. Abramovich, P. Aluffi, L. Battistella, A. Brini, M. Gross, D. Maulik, S.~Molcho, H. Ruddat, J. Wise, and F. You. We thank the anonymous referee for helpful suggestions. N.N. was supported by EPSRC grant EP/R009325/1 and the Herchel Smith Fund. Figure~\ref{fig example} was created using the package Polymake.

\section{Counterexamples, conics, quartics}\label{sec: counterexamples}
\noindent The counterexamples to the cycle-theoretic correspondence follow the same basic principle. The local theory of a split rank two vector bundle satisfies a simple product rule, coming from the Whitney sum formula for the obstruction bundle. The parallel splitting for the logarithmic theory fails. The analysis here is based on examples computed in the Ph.D. thesis of N.N.~\cite[\S 3]{NabijouThesis}.

\subsection{Unpointed counterexample: plane quartics} Let $H_1,H_2\subseteq \PP^2$ be distinct lines. For $i \in \{1,2\}$ we consider the moduli space
\begin{equation}\label{eqn: deg 4 moduli space} \mathsf K_{0,1}^{\mathrm{max}}(\PP^2|H_i,4)\end{equation}
of logarithmic stable maps to $(\PP^2|H_i)$ with maximal tangency at a single marked point. The logarithmic Euler sequence shows that $T_{(\PP^2|H_i)}$ is convex, so the moduli space \eqref{eqn: deg 4 moduli space} is logarithmically smooth. As such it contains the dimensionally transverse locus, consisting of maps with smooth domain whose image is not contained inside $H_i$, as a dense open. Using an explicit parametrisation of this open locus, we conclude that \eqref{eqn: deg 4 moduli space} is irreducible with dimension and expected dimension equal to $8$. 

Forgetting the logarithmic structures and the marking, we obtain a generically finite map:
\[
\pi^{i}: \mathsf K_{0,1}^{\mathrm{max}}(\PP^2|H_i,4)\to \mathsf K_{0,0}(\PP^2,4).
\]
The target is a smooth Deligne--Mumford stack, of dimension $11$.
\begin{lemma}\label{local-product}
There is an equality of classes in $\mathsf K_{0,0}(\PP^2,4)$:
\[
\pi^{1}_\star [\mathsf K_{0,1}^{\mathrm{max}}(\PP^2|H_1,4)]\cdot \pi^{2}_\star [\mathsf K_{0,1}^{\mathrm{max}}(\PP^2|H_2,4)] =  4^2 \cdot [\Kup_{0,0}( \OO_{\PP^2}(-H_1)\oplus \OO_{\PP^2}(-H_2),4)]^{\mathrm{vir}}.
\]
\end{lemma}
\begin{proof}
The rational Chow groups of $\mathsf K_{0,0}(\PP^2,4)$ possess an intersection product, as this space is smooth. The local class on the right-hand side is the product of the Euler classes of the two local classes associated to the Gromov--Witten theories of the bundles $\OO_{\PP^2}(-H_1)$ and $\OO_{\PP^2}(-H_2)$. By the local-logarithmic correspondence for smooth pairs \cite{vGGR}, each Euler class is equal to the corresponding logarithmic term on the left-hand side. The result follows. 
\end{proof}

Consider the moduli space
\begin{equation*} \mathsf K_{0,2}^{\mathrm{max}}(\PP^2|H_1+H_2,4)\end{equation*}
of logarithmic stable maps with maximal contact to $H_1$ and $H_2$ at markings $x_1$ and $x_2$. As before the logarithmic Euler sequence shows that this space is logarithmically smooth, and contains the locus of maps from smooth domains not mapping into $H_1\cup H_2$ as a dense open. It follows that it is irreducible, with dimension equal to the expected dimension $5$. There is a forgetful morphism:
\begin{equation*} \pi \colon \mathsf K_{0,2}^{\mathrm{max}}(\PP^2|H_1+H_2,4) \to \Kup_{0,0}(\PP^2,4).\end{equation*}
The remainder of this section will focus on the following result, which combined with Lemma~\ref{local-product} demonstrates the failure of the local-logarithmic correspondence.
\begin{proposition}\label{prop: naive class not log class} The following classes in $\Kup_{0,0}(\PP^2,4)$ are not equal:
\begin{equation*} \pi^{1}_\star [\mathsf K_{0,1}^{\mathrm{max}}(\PP^2|H_1,4)]\cdot \pi^{2}_\star [\mathsf K_{0,1}^{\mathrm{max}}(\PP^2|H_2,4)] \neq \pi_\star [\mathsf K_{0,2}^{\mathrm{max}}(\PP^2|H_1+H_2,4)].
\end{equation*}
\end{proposition}
\noindent The forgetful morphisms are all generically injective, so the pushforward classes may be identified with the fundamental classes of the images. Proposition~\ref{prop: naive class not log class} becomes:
\begin{equation}\label{eqn: product not equal image classes} [\pi^{1}(\mathsf K_{0,1}^{\mathrm{max}}(\PP^2|H_1,4))]\cdot [\pi^{2}(\mathsf K_{0,1}^{\mathrm{max}}(\PP^2|H_2,4))]\neq [\pi (\mathsf K_{0,2}^{\mathrm{max}}(\PP^2|H_1+H_2,4))].\end{equation}

\begin{lemma}\label{irred-components-lemma}
The intersection
\begin{equation} \label{eqn: intersection of relative spaces} \pi^{1}(\mathsf K_{0,1}^{\mathrm{max}}(\PP^2|H_1,4)) \cap \pi^{2}(\mathsf K_{0,1}^{\mathrm{max}}(\PP^2|H_2,4))\subseteq  \Kup_{0,0}(\PP^2,4) \end{equation}
contains two irreducible components, each of dimension $5$.\end{lemma}

\begin{proof} The first irreducible component is the main component, which is the closure of the locus of dimensionally transverse maps. This is contained in the intersection \eqref{eqn: intersection of relative spaces}. As noted above, it coincides with the image of the moduli space of logarithmic stable maps to $(\PP^2|H_1+H_2)$:
\begin{equation*} \pi(\Kup^{\max}_{0,2}(\PP^2|H_1+H_2,4)).\end{equation*}
For the second irreducible component, consider the closure in $\mathsf K_{0,0}(\PP^2,4)$ of the locus parameterising maps from rational curves of the form:
\[
C = C_0\cup C_1\cup\cdots\cup C_4\to \PP^2
\]
in which each $C_i$ is smooth, the component $C_0$ is contracted to $H_1\cap H_2$ and meets each of the components $C_1,\ldots,C_4$ in a node, and the remaining components are mapped isomorphically onto lines. The image of such a map is a collection of four lines through the point $H_1 \cap H_2$ and there is an $\Mbar_{0,4}$ moduli for the internal component $C_0$; it follows that this locus is $5$-dimensional. It remains to show that it is contained in each of the images $\pi^i(\mathsf K_{0,1}^{\mathrm{max}}(\PP^2|H_i,4))$.  The image of
\[
\mathsf K_{0,1}^{\mathrm{max}}(\PP^2|H_i,4) \to \mathsf K_{0,1}(\PP^2,4)
\]	
is the closure of its interior; the interior consists of maps from smooth domains which have maximal contact order to $H_i$ but do not map inside $H_i$. The closure is identified by Gathmann's numerical balancing criterion~\cite[Remark~1.7(ii)]{GathmannRelative}. Consider the locus of maps
\[
C = C_0\cup C_1\cup\cdots\cup C_4\to \PP^2
\]
as above, where $C_0$ bears the marked point. Each non-contracted component meets $H_i$ with contact order $1$, and by the numerical criterion we deduce that the locus is contained in the image of $\mathsf K_{0,1}^{\mathrm{max}}(\PP^2|H_i,4)\to\mathsf K_{0,1}(\PP^2,4)$. The claim follows by applying the forgetful morphism $\Kup_{0,1}(\PP^2,4) \to \Kup_{0,0}(\PP^2,4)$.\end{proof}

\begin{proof}[Proof of Proposition \ref{prop: naive class not log class}]
By Lemma~\ref{irred-components-lemma}, the intersection contains at least two irreducible components, each of dimension $5$. Any additional irreducible component must arise as the intersection of images of boundary strata in $\Kup_{0,1}^{\max}(\PP^2|H_i,4)$. We claim its dimension is at most $5$. Consider $\mathsf K_{0,1}^{\mathrm{max}}(\PP^2|H_1,4)$. This has dimension $8$, and any boundary stratum has dimension at most $7$. Forgetting the marking reduces the dimension to $6$ unless the marking lies on a contracted tail. In the former case, as in general, the resulting locus in $\Kup_{0,0}(\PP^2,4)$ does not generically satisfy the numerical criterion with respect to $H_2$, which cuts the dimension to at least $5$. In the latter case, the component of $4$ lines has been dealt with already. The remaining possibility comprises a conic tangent to $H_1$ and two lines through the tangency point. Elementary geometry again bounds the dimension at $5$. It follows that every irreducible component of the intersection \eqref{eqn: intersection of relative spaces} has dimension~$5$.

The left-hand side of \eqref{eqn: product not equal image classes} is the sum of classes of the irreducible components, with positive multiplicities~\cite[Proposition~7.1]{FultonIntersectionTheory}. The space $\mathsf K_{0,0}(\PP^2,4)$ is projective, and Bezout's theorem guarantees that the sum of classes of excess components is not zero in the Chow group of $\mathsf K_{0,0}(\PP^2,4)$. The component $[\pi(\Kup_{0,2}^{\max}(\PP^2|H_1+H_2,4)]$ appears on both sides of \eqref{eqn: product not equal image classes}, so the two sides cannot be equal. \end{proof}

\begin{remark}\label{rem: numerical-counterexample}
The counterexample implies that there is \textit{some} invariant for which the local and logarithmic theories differ. Indeed, Gromov--Witten theory includes all integrals against \textit{tautological} classes on the moduli space of stable maps. Poincar\'e duality furnishes a cohomology class that distinguishes the two classes, but the cohomology of $\mathsf K_{0,0}(\PP^2,4)$ is entirely tautological, see~\cite{Oprea}. An explicit instance is calculated in \S\ref{sec: example}.
\end{remark}

\begin{remark} The intersection in Lemma~\ref{irred-components-lemma} is exactly the union of the two components described above; there are no additional components. This follows from the blowup analysis of \S\S\ref{sec: subdivisions}-\ref{sec: blowup formula}, which also provides a technique to calculate the excess components.\end{remark}

\begin{remark}[Primary correspondence]\label{rem: deg-4}
In the degree $4$ maximal contact geometry for $(\PP^2|H_1+H_2)$, the excess component consists of $4$ lines through a point. As a consequence, this component cannot contribute to a Gromov--Witten invariant with only primary insertions, as the cross ratio of the nodes on the contracted component cannot be fixed by primary insertions. In particular, the local-logarithmic correspondence holds in degrees up to $4$ with primary insertions. 
\end{remark}

\subsection{Pointed counterexample: plane conics}\label{sec: counter-2} The strong form of the correspondence implies the original form, so the counterexample above also falsifies the strong form. We record a simpler failure of the strong form, which occurs in lower degree. We leave some verifications to the reader, since the analysis is simpler than the one above. 

For $i \in \{1,2\}$ consider the moduli space $\Kup^{\max}_{0,2}(\PP^2|H_i,2)$ with maximal contact order at the marking $x_i$ and zero contact order at the marking $x_{\neq i}$. This is the universal curve over the moduli space $\Kup^{\max}_{0,1}(\PP^2|H_i,2)$. Proceeding as above, it suffices to show the following inequality
\begin{equation} \label{eqn: product conic spaces} [\Kup^{\max}_{0,2}(\PP^2|H_1,2)] \cdot [\Kup^{\max}_{0,2}(\PP^2|H_2,2)] \neq [\Kup^{\max}_{0,2}(\PP^2|H_1+H_2,2)]\end{equation}
in $\Kup_{0,2}(\PP^2,2)$, where we have suppressed pushforwards from the notation.

\begin{proof}[Proof of \eqref{eqn: product conic spaces}] We examine the intersection:
\begin{equation}\label{eqn: intersection of conic spaces} \Kup_{0,2}^{\max}(\PP^2|H_1,2) \cap \Kup_{0,2}^{\max}(\PP^2|H_2,2) \subseteq \Kup_{0,2}(\PP^2,2).\end{equation}
This has a main component: the closure of the space of maps intersecting each $H_i$ in precisely one point. This component coincides with the locus $\Kup_{0,2}^{\max}(\PP^2|H_1+H_2,2)$, which has dimension equal to the expected dimension $3$.

A second $3$-dimensional component of the intersection \eqref{eqn: intersection of conic spaces} parametrises maps of the form
\[
C_0\cup C_1\cup C_2\to \PP^2
\]
where $C_0$ bears the two marked points, is contracted to $H_1\cap H_2$, and meets $C_1$ and $C_2$ at distinct points. The components $C_1$ and $C_2$ each map isomorphically onto lines. Elementary geometry shows that this locus has dimension $3$: two dimensions for the two lines and one for the cross ratio of the four points. Direct analysis shows that there are no further irreducible components. This second  irreducible component contributes with positive multiplicity. Therefore the logarithmic class is not the product of the two classes associated to the relative theories of the smooth pairs. This yields a counterexample to the strong form of the conjecture. \end{proof}

\begin{remark}
The examples are the lowest degree failures of the two conjectures. The degree $2$ counterexample above does not yield a counterexample to the original form of the correspondence; the cross ratio of the points in the contracted component is lost, so the class vanishes in the pushforward. So the strong form of the conjecture is genuinely stronger than the original one!
\end{remark}

\section{Correcting the correspondence I: subdivisions and modifications}\label{sec: subdivisions}
\noindent The failure of the local-logarithmic correspondence stems from the fact that moduli spaces of logarithmic maps do not satisfy a naive product formula over the space of ordinary maps:
\begin{equation*} \Kup(X|D_1) \times_{\Kup(X)} \Kup(X|D_2) \neq \Kup(X|D).\end{equation*}
The left-hand side can include excess components, even in convex settings where the right-hand side is irreducible. The local and naive theories do satisfy a product formula, so the local-logarithmic correspondence cannot hold in generality. This observation led to the counterexamples of~\S \ref{sec: counterexamples}.

In the next two sections, we establish a method for calculating the defect between the naive and logarithmic theories. We transversalise the naive intersection by performing blowups on $\Kup(X)$, and apply Fulton's blowup formula to quantify the difference between the theories.

\subsection{Setup: target geometry and moduli spaces} \label{sec: setup} Consider a target $(X|D)$ with $X=\PP^{n_1}\times \PP^{n_2}$ and $D=D_1+D_2$ a divisor, with each smooth component $D_i$ the pullback of a hyperplane in $\PP^{n_i}$. Spaces of genus zero logarithmic stable maps to $X$, $(X|D_1)$, $(X|D_2)$ and $(X|D)$ are logarithmically unobstructed; the discussion which follows applies to any target satisfying this.

We establish a corrected local-logarithmic correspondence in this setting; the case with more divisor components follows \emph{mutatis mutandis} by replacing $D_2$ with $D_2+\ldots+D_k$, and the case of hyperplane sections follows by virtual pull-back (see \S \ref{sec: virtual pullback}).

We begin by establishing notation for the maximal contacts theory. We fix a curve class $\beta$ and introduce markings $x_1,x_2$ which have maximal tangency with respect to $D_1,D_2$ respectively. We obtain a moduli space of logarithmic stable maps to $(X|D)$ with maximal contacts
\begin{equation*} \Kup_{0,2}^{\max}(X|D,\beta) \end{equation*}
and for $i \in \{1,2\}$ a moduli space of logarithmic stable maps $\Kup_{0,2}^{\max}(X|D_i,\beta)$ to $(X|D_i)$.
The latter space is also two-pointed; the marking $x_{\neq i}$ carries no tangency condition. This is the universal curve over the one-pointed space $\Kup_{0,1}^{\max}(X|D_i,\beta)$.

The following target diagram is cartesian in fine and saturated logarithmic schemes:
\[
\begin{tikzcd}
(X|D)\arrow{r}\arrow{d}\ar[rd,phantom,"\square"] & (X|D_1) \arrow{d} \\
(X|D_2) \arrow{r} & X.
\end{tikzcd}
\]
The moduli spaces of two-pointed logarithmic stable maps enjoy a similar relationship
\[
\begin{tikzcd}
\mathsf K_{0,2}^{\max}(X|D,\beta)\arrow{r}\arrow{d}\ar[rd,phantom,"\square"] & \mathsf K_{0,2}^{\max}(X|D_1,\beta) \arrow{d} \\
\mathsf K_{0,2}^{\max}(X|D_2,\beta) \arrow{r} & \mathsf K_{0,2}(X,\beta)
\end{tikzcd}
\]
see~\cite[Theorem 2.6]{AbramovichChenLog}. This diagram is cartesian in the category of fine and saturated logarithmic stacks, but not typically cartesian in the category of ordinary stacks (the cartesian product in the category of ordinary stacks is instead the naive space). The failure is accounted for by the fact that neither of the morphisms $\mathsf K_{0,2}^{\max}(X|D_i,\beta)\to \mathsf K_{0,2}(X,\beta)$ is integral and saturated.

\subsection{Semistable reduction}\label{sec: weak-ss-red} 
Our strategy is to correct this fibre product, by replacing the morphism $\mathsf{K}_{0,2}^{\max}(X|D_1,\beta) \to \mathsf{K}_{0,2}(X,\beta)$ with an integral and saturated birational model. This will be constructed using weak semistable reduction \cite{AK00,Mol}.

A toroidal morphism $X\to B$ of toroidal embeddings is logarithmically smooth with the divisorial structure. The morphism need not be equidimensional or have reduced fibres. In their work on weak semistable reduction, Abramovich--Karu identify criteria for these properties. 

\begin{lemma}[{\cite[Lemma~4.1]{AK00}}]\label{lem: comb-equi}\label{lem: AK surject cone}
Let $f: X\to B$ be a toroidal morphism of toroidal embeddings and let $\Sigma_X\to\Sigma_B$ be the morphism of cone complexes. Then $f$ has equidimensional fibres if and only if every cone of $\Sigma_X$ surjects onto a cone of $\Sigma_B$. 
\end{lemma}

\begin{lemma}[{\cite[Lemma~5.2]{AK00}}]\label{lem: AK reduced}
Let $f: X\to B$ be a toroidal morphism with equidimensional fibres and let $\Sigma_X\to\Sigma_B$ be the morphism of cone complexes. Then $f$ has reduced fibres if and only if for every cone $\sigma$ with image cone $\tau$, the image of the morphism on associated lattices is saturated.
\end{lemma}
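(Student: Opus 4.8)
The plan is to reduce the assertion to a local computation on toric charts, and then to a rank-one model in which the multiplicity of a fibre component is manifestly the index of a sublattice.

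First I would reduce to the toric case. Reducedness of the fibres of $f$ is \'etale-local on both $X$ and $B$, and the lattice condition $\Sigma_f(N_\sigma) = N_\tau$ is formulated one cone at a time; both are unchanged upon passing to a toroidal chart. We may therefore assume that $f$ is a dominant toric morphism $\Spec \kk[S_\sigma] \to \Spec \kk[S_\tau]$ of affine toric varieties, where $S_\sigma$ and $S_\tau$ are the dual monoids of $\sigma$ and $\tau$ and the comorphism is dual to $\Sigma_f \colon N_\sigma \to N_\tau$. By the equidimensionality hypothesis together with Lemma~\ref{lem: comb-equi}, each $\sigma$ surjects onto its image cone $\tau$, so $\Sigma_f$ restricts to a surjection $\sigma \twoheadrightarrow \tau$; after localising we may treat a single such pair $\sigma \mapsto \tau$ at a time.

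Next I would isolate the fibre to analyse. By torus-equivariance the most degenerate fibre is the one over the torus-fixed point of the local model $\Spec \kk[S_\tau]$, and its reducedness controls that of every fibre over the stratum of $\tau$; I would check reducedness there. This fibre has coordinate ring $\kk[S_\sigma] \otimes_{\kk[S_\tau]} \kk$, obtained from $\kk[S_\sigma]$ by killing every non-invertible monomial pulled back from $S_\tau$. Its irreducible components are indexed by the maximal cones over $\tau$, each being the toric variety of the fibre cone $\sigma \cap \ker \Sigma_f$, of dimension $\dim \sigma - \dim \tau$ as forced by equidimensionality.

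The heart of the argument, and the step I expect to be the main obstacle, is computing the multiplicity of each such component, that is, the length of the Artinian local ring of the fibre at its generic point. Passing to the quotient lattice $\overline{N}_\sigma = N_\sigma / (N_\sigma \cap \ker \Sigma_f)$ turns $\Sigma_f$ into an inclusion $\overline{N}_\sigma \hookrightarrow N_\tau$ of finite index $e = [N_\tau : \Sigma_f(N_\sigma)]$; dually $M_\tau \hookrightarrow \overline{M}_\sigma$ has index $e$. The transverse geometry then collapses to the toric morphism attached to this inclusion, whose fibre ring over the fixed point has a basis given by the monomials $x^m$ surviving the quotient, which are in bijection with the cosets of $\overline{M}_\sigma / M_\tau$ and hence number exactly $e$. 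In the rank-one model this recovers the familiar $\kk[u]/(u^e)$ arising from $t \mapsto t^e$. Thus the component appears with multiplicity $e$, and the fibre is reduced along it if and only if $e = 1$, that is $\Sigma_f(N_\sigma) = N_\tau$. Ranging over all pairs $\sigma \mapsto \tau$ yields the criterion. The delicate points are making rigorous, at the level of monoid algebras, that the only source of nilpotents is the failure of surjectivity on lattices, so that the transverse length is exactly the index $e$ and not some coarser monoid invariant; and confirming that testing reducedness over the torus-fixed point genuinely detects it for every fibre of $f$, which rests on the torus action together with the behaviour of the non-reduced locus in a toric family.
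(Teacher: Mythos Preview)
The paper does not give its own proof of this lemma: it is stated as a citation of \cite[Lemma~5.2]{AK00} and used as black-box input to the toroidal weak semistable reduction package in Section~\ref{sec: weak-ss-red}. There is therefore nothing in the paper to compare your argument against.

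That said, your outline follows the standard route one finds in Abramovich--Karu and in the toric geometry literature: reduce to affine toric charts, examine the fibre over the torus-fixed point, and identify the multiplicity of a component with the lattice index $[N_\tau : \Sigma_f(N_\sigma)]$. The sketch is essentially correct. The two points you flag as delicate are indeed the ones requiring care: that the transverse Artinian length is exactly the lattice index (this is the computation of $\kk[S_\sigma]\otimes_{\kk[S_\tau]}\kk$ at the generic point of a component, and it does reduce cleanly to the rank-one model after quotienting by the kernel lattice), and that testing at the torus-fixed point suffices (which follows from torus-equivariance and upper-semicontinuity of fibre multiplicity, or more directly from the stratification of the base by torus orbits). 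Neither is a genuine obstacle, and your plan would go through.
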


A toroidal morphism satisfying the conditions in the lemmas above is \textbf{weakly semistable}. Any toroidal morphism can be modified to a weakly semistable one.

\begin{proposition}[Toroidal weak semistable reduction{~\cite{AK00}}]
Let $f: X\to B$ and $\Sigma_f: \Sigma_X\to\Sigma_B$ be as above. There exist subdivisions of the source and target $\Sigma^\dagger_X\to\Sigma^\dagger_B$, such that the resulting morphism $X^\dagger\to B^\dagger$ is equidimensional. By applying a sequence of root stack constructions (change of lattice) to $B^\dagger$, we obtain a Deligne--Mumford stack $\mathcal{B}^\dagger$ and a new morphism $X^\dagger\to \mathcal{B}^\dagger$
which is equidimensional with reduced fibres. 
\end{proposition}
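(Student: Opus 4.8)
The plan is to reduce the whole statement to combinatorics on the cone complexes, using the two criteria of Abramovich--Karu recorded above, and then to produce the required subdivisions and lattice change by hand. Both the equidimensionality condition of Lemma~\ref{lem: comb-equi} and the reducedness condition recorded above depend only on the morphism of cone complexes $\Sigma_f\colon \Sigma_X\to\Sigma_B$. Since subdivisions of cone complexes correspond to toroidal birational modifications, and since a change of lattice (passage to a finite-index sublattice) corresponds to a root stack, or Kummer, construction, it suffices to produce subdivisions $\Sigma_X^\dagger\to\Sigma_X$ and $\Sigma_B^\dagger\to\Sigma_B$, together with a lattice refinement of the target, making the combinatorial conditions hold. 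The geometric conclusions then follow formally from the toroidal dictionary.

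First I would establish equidimensionality. By Lemma~\ref{lem: comb-equi} the goal is to arrange that the image of every cone of the source is exactly a cone of the target. I would begin by fixing any subdivision $\Sigma_B^\dagger\to\Sigma_B$ of the target, and then refine the source by taking the common refinement with the pullback: replace each cone $\sigma$ of $\Sigma_X$ by the cones $\sigma\cap\Sigma_f^{-1}(\tau)$ as $\tau$ ranges over the cones of $\Sigma_B^\dagger$, each of which is polyhedral since $\Sigma_f$ is linear on $\sigma$. After this step every cone of the source maps into a single cone $\tau$ of $\Sigma_B^\dagger$, with image a subcone of $\tau$. What remains is to eliminate the cones whose image is a proper, lower-dimensional subcone of the target cone lying beneath them, since these are precisely the cones forcing the fibre dimension to jump. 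This is achieved by a further fibrewise subdivision of the source, performed inductively over the skeleton of $\Sigma_B^\dagger$: over each cone $\tau$ one subdivides the part of the source lying above $\tau$ into cones each surjecting onto $\tau$ or onto a face of $\tau$, choosing these subdivisions compatibly with those already fixed over the faces of $\tau$.

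The main obstacle is exactly this fibrewise equidimensionalization: the subdivisions chosen over the various cones of the target must glue into a single globally consistent subdivision of the source. This coherence is the genuine combinatorial heart of the Abramovich--Karu construction. It can be handled either by inducting on the dimension of $\Sigma_B^\dagger$ and extending subdivisions from the boundary into the interior of each cone, or by invoking a global coherent (projective) subdivision of the source relative to the target; everything else in the argument is comparatively formal.

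Finally I would obtain reducedness by a change of lattice. After the equidimensionalization above, for each cone $\sigma$ of the modified source with image cone $\tau$, the surjection forces the induced lattice map $\Sigma_f\colon N_\sigma\to N_\tau$ to have full-rank, hence finite-index, image. To impose the equality $\Sigma_f(N_\sigma)=N_\tau$ demanded by the reducedness criterion, I would replace the target lattice $N_\tau$ by the finite-index sublattice $\Sigma_f(N_\sigma)$, carried out consistently across $\Sigma_B^\dagger$; this is possible because the equidimensionalizing subdivision can be arranged so that these image lattices agree on shared faces. Passing to a finite-index sublattice of the target while keeping the same fan is exactly a sequence of root stack constructions applied to $B^\dagger$, producing the Deligne--Mumford stack $\mathcal{B}^\dagger$ and the morphism $X^\dagger\to\mathcal{B}^\dagger$. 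By the reducedness criterion this morphism is equidimensional with reduced fibres, as required.
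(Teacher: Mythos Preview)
The paper does not give its own proof of this proposition: it is quoted as a background result from Abramovich--Karu~\cite{AK00}, and no argument is supplied beyond the statement. So there is nothing in the paper to compare your proposal against.

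That said, your sketch is a faithful outline of the Abramovich--Karu argument and is essentially correct. A couple of small points worth tightening. First, for the lattice refinement step you write ``replace the target lattice $N_\tau$ by the finite-index sublattice $\Sigma_f(N_\sigma)$''; but several source cones $\sigma$ may map onto the same $\tau$, so one must pass to the intersection of all the image sublattices (or a suitable common sublattice), and then check that these choices are compatible across faces of $\Sigma_B^\dagger$. Second, your initial ``fix any subdivision $\Sigma_B^\dagger$ of the target'' is slightly misleading: in the actual construction the target subdivision is not arbitrary but is itself dictated by the images of the source cones (one subdivides the target so that the images of source cones become unions of target cones), and only then does one refine the source. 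These are refinements of exposition rather than genuine gaps; the structure of your argument is the right one.
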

The Abramovich--Karu construction is non-unique, depending on an auxiliary choice of piecewise-linear support functions. Later work of Molcho \cite[Theorem 2.4.2]{Mol} shows that if the morphism $\Sigma_f$ is proper and surjective, there is a unique minimal choice. The construction declares the image of every cone to be a cone, and subdivides the intersections as necessary.

\subsection{The subdivision} \label{sec: construction of subdivisions}
We apply the construction in the previous section to spaces of logarithmic stable maps. The first step is to replace each of the moduli spaces $\Kup^{\max}_{0,2}(X|D_i,\beta)$ for $i=1,2$ with Kim's space of logarithmic stable maps to expansions \cite{Kim08}. As discussed in \cite[Section 2.1]{BNR1}, this is a logarithmic modification of the Abramovich--Chen--Gross--Siebert moduli space, representing the subfunctor of \textbf{image-ordered} tropical maps. The tropicalisation
\begin{equation*} \Tup_{0,2}^{\max}(X|D_i,\beta) = \mathsf{Trop\, } \Kup_{0,2}^{\max}(X|D_i,\beta)  \end{equation*}
is the cone complex parametrising image-ordered degree-weighted tropical stable maps to $\RR_{\geq 0}$. The space $\Kup_{0,2}(X,\beta)$ has logarithmic structure induced by its normal crossings boundary (equivalently, by viewing it as a space of logarithmic stable maps to a trivial logarithmic scheme), and the tropicalisation
\begin{equation*} \Tup_{0,2}(X,\beta) = \mathsf{Trop\, } \Kup_{0,2}(X,\beta) \end{equation*}
is the cone complex parametrising degree-weighted tropical stable curves. We apply weak semistable reduction to the morphism
\begin{equation*} \Kup_{0,2}^{\max}(X|D_1,\beta) \to \Kup_{0,2}(X,\beta).\end{equation*}
This produces subdivisions of the associated cone complexes with an induced morphism
\[
\mathsf T_{0,2}^{\max}(X|D_1,\beta)^\dagger\to \mathsf T_{0,2}(X,\beta)^\dagger
\]
which is combinatorially equidimensional and reduced; it satisfies the polyhedral criteria for these conditions. On the associated logarithmic modifications, we obtain a morphism
\[
\mathsf K_{0,2}^{\max}(X|D_1,\beta)^\dagger \to \mathsf{K}_{0,2}(X,\beta)^\dagger
\]
which is integral and saturated. We subdivide $\Tup_{0,2}(X|D_2,\beta)$ by pulling back the subdivision $\Tup_{0,2}(X,\beta)^\dag$ of $\Tup_{0,2}(X,\beta)$ (note the asymmetry between $D_1$ and $D_2$ in this construction). We thus obtain a diagram
\[
\begin{tikzcd}
\mathsf K_{0,2}^{\max}(X|D,\beta)^\dag \arrow{r}\arrow{d}\ar[rd,phantom,"\square"] & \mathsf K_{0,2}^{\max}(X|D_1,\beta)^\dag \ar[d,"g"] \\
\mathsf K_{0,2}^{\max}(X|D_2,\beta)^\dag \arrow{r} & \mathsf K_{0,2}(X,\beta)^\dag
\end{tikzcd}
\]
which, since the morphism $g$ is now integral and saturated, is cartesian in both the category of fine and saturated logarithmic stacks and the category of ordinary stacks. The fibre product $\Kup_{0,2}^{\max}(X|D,\beta)^\dag$ is a birational model for $\Kup_{0,2}^{\max}(X|D,\beta)$.

\begin{remark} The construction is canonical, since the morphism of cone complexes
\begin{equation*} \Tup_{0,2}^{\max}(X|D_1,\beta) \to \Tup_{0,2}(X,\beta) \end{equation*}
is surjective. This is shown at the start of the next section. \end{remark}

\begin{remark} The preceding subdivisions do not require a change of lattice (saturation), as the minimal monoid associated to a tropical stable map is automatically saturated over the minimal monoid associated to the underlying tropical curve \cite[\S 1.5]{GS13}. However, the interpretation of our weighted blowups as stacky modifications will require a stacky change of lattice, see \S \ref{sec: weighted blowups}.\end{remark}

\subsection{Modular description: image-ordering (left-to-right)} \label{sec: modular description image ordering} In order to access the intersection theory of these modifications, it is necessary to obtain a more explicit description of the subdivisions involved. We begin with a modular interpretation for the subdivision $\Tup_{0,2}(X,\beta)^\dag$ in terms of order relations on the vertices of the tropical curve. A similar discussion can be found in~\cite{CMR14b} and additional examples are discussed there.

\begin{remark} Both the results and the arguments of this section apply beyond the maximal contacts setting, to any moduli space of genus zero logarithmic stable maps relative to a smooth divisor. \end{remark}

Given a two-pointed, degree-weighted stable tropical curve $\sqC$ over a base cone $\sigma$, we may assign the \textbf{formal expansion factor} $D_1 \cdot \beta$ to the semi-infinite leg corresponding to $x_1$, and the formal expansion factor $0$ to the semi-infinite leg corresponding to $x_2$. Having done this, there is then a unique way to assign a formal expansion factor $m_e$ to each (directed) edge $e \in \sqC$, in such a way that the resulting tropical curve is balanced; this is a consequence of the genus zero hypothesis. From this, we obtain a tropical map
\begin{equation*} f \colon \sqC \to \mathrm{Hom}(\sigma,\RR),\end{equation*}
well-defined up to overall translation in $\RR$. For vertices $v_1,v_2 \in \sqC$ we declare
\begin{equation*}  f(v_1) \leq f(v_2) \text{\quad if and only if \quad} f(v_2) - f(v_1) \in \Hom(\sigma,\RR_{\geq 0})\end{equation*}
and observe that this defines a partial ordering on the vertices of $\sqC$.

\begin{proposition}  \label{prop: modular description image ordered} $\Tup_{0,2}(X,\beta)^\dag$ is the space of degree-weighted stable tropical curves, such that \textbf{the $f(v)$ are totally ordered.} The cones of this subdivision are the images of cones of $\Tup_{0,2}^{\max}(X|D_1,\beta)$.	
\end{proposition}

We refer to $\Tup_{0,2}(X,\beta)^\dag$ as the moduli space of \textbf{image-ordered} tropical curves. A similar construction was outlined in~\cite[Section 2.1]{BNR1}.

\begin{proof} Temporarily denote the moduli space of image-ordered tropical curves by $\Tup_{0,2}(X,\beta)^\ddagger$; it is clear that this is a subdivision of $\Tup_{0,2}(X,\beta)$.

Consider a cone $\tau \in \Tup^{\max}_{0,2}(X|D_1,\beta)$. This corresponds to a combinatorial type of tropical stable map to $\RR_{\geq 0}$, and if we consider the image $\bar{\tau} \subseteq |\Tup_{0,2}(X,\beta)|$ and restrict the universal curve $\sqC$ to $\bar{\tau}$, we obtain a tropical curve whose $f(v)$ are totally ordered (we obtain a total ordering because we work with Kim's space). This total ordering determines a combinatorial type of image-ordered curve, corresponding to a cone $\rho \in \Tup_{0,2}(X,\beta)^\ddagger$ such that $\bar{\tau} \subseteq \rho$. We need to show that in fact $\bar{\tau} = \rho$.

The cone $\tau$ is simplicial, with coordinates over $\QQ$ given by the target edge lengths $l_1,\ldots,l_k$. We may assume that over $\tau$ there is at least one vertex $v_0 \in \sqC$ mapping to $0 \in \RR_{\geq 0}$ (if not, replace $\tau$ with the subcone defined by $l_1=0$, and note that this does not alter $\bar\tau$ or $\rho$).

Choosing for each $i$ a stable vertex $v_i \in \sqC$ mapping to the $i$th target vertex, we have $f(v_0) < f(v_1) < \ldots < f(v_k)$ on $\rho$, and every other vertex satisfies $f(v)=f(v_i)$ for some $i$. Thus, we see that $\rho$ is also a simplicial cone, with coordinates over $\QQ$ given by:
\begin{equation*} f(v_1)-f(v_0), f(v_2) - f(v_1), \ldots, f(v_k) - f(v_{k-1}).\end{equation*}
The map $\tau \to \rho$ is given by  $l_i \mapsto f(v_i) - f(v_{i-1})$, which is clearly surjective, so $\bar{\tau} = \rho$ as required.

On the other hand, given a cone $\rho \in \Tup_{0,2}(X,\beta)^\ddagger$ corresponding to a combinatorial type of image-ordered tropical curve, we obtain a unique minimal combinatorial type for a stable tropical map $\sqC \to \RR_{\geq 0}$, by forcing vertices in $\sqC$ with minimal $f(v)$ to map to $0 \in \RR_{\geq 0}$; there are no further edge length relations as $\sqC$ has genus zero. This corresponds to a cone $\tau \in \Tup^{\max}_{0,2}(X|D_1,\beta)$ and it follows from the discussion above that $\bar{\tau} = \rho$. \end{proof}

\begin{corollary}\label{cor: D1 space not modified} The subdivision procedure does not modify the source cone complex:
\begin{equation*}\Tup_{0,2}^{\max}(X|D_1,\beta)^\dag =\Tup_{0,2}^{\max}(X|D_1,\beta).\end{equation*}\end{corollary}
\begin{proof} The image $\bar{\tau}$ of every cone of $\Tup_{0,2}^{\max}(X|D_1,\beta)$ is a cone in the image-ordered subdivision, so the images of two cones cannot intersect away from the image of a common face.\end{proof}

\begin{remark} The previous result helps in understanding the subdivision procedure. It is essentially unimportant for our later arguments. \end{remark}

\begin{corollary} \label{cor: iterated blowup smooth} The subdivision $\Tup_{0,2}(X,\beta)^\dag$ is simplicial and $\Kup_{0,2}(X,\beta)^\dag$ is a smooth orbifold.\end{corollary}
\begin{proof} We saw in the proof of Proposition \ref{prop: modular description image ordered} that image-ordered cones are simplicial. The fact that $\Kup_{0,2}(X,\beta)^\dag$ is smooth follows immediately, interpreting the logarithmic modification as a non-representable orbitoroidal embedding (see \S \ref{sec: weighted blowups} for details). \end{proof}

\subsection{Modular description: alignment (right-to-left)} The results of the previous subsection are general, applying to moduli spaces with arbitrary tangency orders. When the contact order is maximal, we exhibit a combinatorial factorisation of the subdivision, describing it as a sequence of weighted stellar subdivisions along smooth cones. The description resembles the radial alignments in~\cite{RSW17A}.

Observe that if we let $v_0 \in \sqC$ denote the vertex containing the marking $x_1$, the balancing condition implies that $f(v_0)$ must be maximal amongst the $f(v)$. If we therefore let
\begin{equation*} \varphi(v) = f(v_0) - f(v) \in \Hom(\sigma,\RR_{\geq 0}) \end{equation*}
then we see that totally ordering the $f(v)$ is equivalent to totally ordering the $\varphi(v)$. We think of $\varphi(v)$ as the \textbf{distance from the root $v_0$}: it is the expansion-factor-weighted sum of the edge lengths along the unique path connecting $v_0$ to $v$. We obtain:

\begin{proposition}\label{prop: modular description distance from root} $\Tup_{0,2}(X,\beta)^\dagger$ is the moduli space of degree-weighted stable tropical curves, such that the distances $\varphi(v)$ from the root $v_0$ are totally ordered.\end{proposition} 
We call such a tropical curve \textbf{radially aligned}, or \textbf{aligned}, with respect to $v_0$.
 
\subsection{Iterative description} \label{sec: iterative description} The modular interpretation via alignments gives a very concrete iterative description of $\Tup_{0,2}(X,\beta)^\dagger \to \Tup_{0,2}(X,\beta)$, and therefore of the logarithmic modification $\Kup_{0,2}(X,\beta)^\dagger \to \Kup_{0,2}(X,\beta)$. This description, inspired by results in~\cite{RSW17A,VZ08}  will be crucial in \S \ref{sec: blowup formula}.

\begin{definition}
A \textbf{floral cone} $\sigma \in \Tup_{0,2}(X,\beta)$ is a cone indexed by a type of the following form:
\begin{center}
\begin{tikzpicture}


	\draw[fill=black] (0,0) circle[radius=2pt];
	\draw (0,0) node[left]{\small$\beta_1$};
	\draw[fill=black] (0,0) -- (1,-2);
		
	\draw (1,0) node{$\ldots$};
	
	\draw[fill=black] (2,0) circle[radius=2pt];
	\draw (2,0) node[right]{\small$\beta_r$};
	\draw[fill=black] (2,0) -- (1,-2);
	
	\draw[fill=black] (1,-2) circle[radius=2pt];
	\draw (1,-2) node[left]{\small$\beta_0$};
	\draw[->] (1,-2) -- (1,-2.5);
	\draw (1.05,-2.45) node[below]{\small$x_1$};
	
	
	
	
\end{tikzpicture}	
\end{center}	
\end{definition}
\noindent The vertex supporting the marking $x_2$ is allowed to be arbitrary, and is denoted $v(x_2)$. We impose a partial ordering on the floral cones, as follows:
\begin{equation}\label{order floral cones}
\begin{tikzpicture}[baseline=(current  bounding  box.center)]
	\draw[fill=black] (0,0) circle[radius=2pt];
	\draw (0,0) node[left]{\small$\beta_1$};
	\draw[fill=black] (0,0) -- (1,-2);
	\draw (1,0) node{$\ldots$};
	\draw[fill=black] (2,0) circle[radius=2pt];
	\draw (2,0) node[right]{\small$\beta_r$};
	\draw[fill=black] (2,0) -- (1,-2);
	\draw[fill=black] (1,-2) circle[radius=2pt];
	\draw (1,-2) node[left]{\small$\beta_0$};
	\draw[->] (1,-2) -- (1,-2.5);
	\draw (1.05,-2.45) node[below]{\small$x_1$};
	
	\draw (3,-1.3) node[right]{$<$};
	
	\draw[fill=black] (4.6,0) circle[radius=2pt];
	\draw (4.6,0) node[left]{\small$\beta_1^\prime$};
	\draw[fill=black] (4.6,0) -- (5.6,-2);
	\draw (5.6,0) node{$\ldots$};
	\draw[fill=black] (6.6,0) circle[radius=2pt];
	\draw (6.6,0) node[right]{\small$\beta_{r^\prime}^\prime$};
	\draw[fill=black] (6.6,0) -- (5.6,-2);
	\draw[fill=black] (5.6,-2) circle[radius=2pt];
	\draw (5.6,-2) node[left]{\small$\beta_0^\prime$};
	\draw[->] (5.6,-2) -- (5.6,-2.5);
	\draw (5.65,-2.45) node[below]{\small$x_1$};
	
		
\end{tikzpicture}
\end{equation}
if and only if
\begin{enumerate}
\item $\beta_0 < \beta_0^\prime$; or\medskip
\item $\beta_0=\beta_0^\prime$ and $r < r^\prime$; or\medskip
\item $\beta_0=\beta_0^\prime,r=r^\prime$ and $v(x_2) \neq v_0$ but $v^\prime(x_2)=v_0$.	
\end{enumerate}

\begin{remark} \label{rmk: cone not in star} Given floral cones $\sigma,\sigma^\prime \in \Tup_{0,2}(X,\beta)$ with $\sigma^\prime \in \text{Star}(\sigma)$, stability ensures that $\sigma^\prime < \sigma$. Equivalently:
\begin{equation*} \sigma^\prime  \not < \sigma \Rightarrow \sigma^\prime \not\in \text{Star}(\sigma). \end{equation*}
Therefore $\sigma^\prime$ will be unaffected by taking a weighted stellar subdivision along $\sigma$, i.e. it will remain a cone in the subdivided cone complex.
\end{remark}

Given this setup, we have the following strong combinatorial structure result:

\begin{theorem}\label{thm: iterative description}
 The morphism $\Tup_{0,2}(X,\beta)^\dagger \to \Tup_{0,2}(X,\beta)$ is an iterated weighted stellar subdivision of $\Tup_{0,2}(X,\beta)$ along floral cones, in an order extending the partial order \eqref{order floral cones}.
\end{theorem}
A \textbf{floral stratum} is a closed boundary stratum $Z(\sigma) \subseteq \Kup_{0,2}(X,\beta)$ corresponding to a floral cone $\sigma \in \Tup_{0,2}(X,\beta)$.

\begin{corollary} \label{cor: iterative blowup} The morphism $\Kup_{0,2}(X,\beta)^\dagger \to \Kup_{0,2}(X,\beta)$ is an iterated weighted blowup of $\Kup_{0,2}(X,\beta)$ along strict transforms of floral strata, in an order extending the partial order \eqref{order floral cones}.\end{corollary}

\begin{remark} \label{rmk: strict transforms well defined} The statement of Remark \ref{rmk: cone not in star} asserts that $Z(\sigma^\prime)$ is not contained in the blowup centre $Z(\sigma)$, and therefore that its strict transform under the blowup --- indexed by the same cone $\sigma^\prime$ in the subdivided complex --- is nonempty.
\end{remark}

\begin{proof}[Proof of Theorem \ref{thm: iterative description}] Fix a cone $\sigma \in \Tup_{0,2}(X,\beta)$ corresponding to a combinatorial type of a two-pointed, degree-weighted tropical curve $\sqC$. As before, let $v_0 \in \sqC$ denote the root vertex containing the marking $x_1$, and use the balancing condition to assign formal expansion factors to every edge.
	
We construct the radially aligned subdivision $\sigma^\dag \to \sigma$ inductively. The idea is as follows: in order to choose a total ordering of the distances $\varphi(v)$ of the vertices from the root, we first must decide which vertex has smallest $\varphi(v)$. Having done this, we then need to decide which vertex is the next-smallest, i.e. the smallest amongst the remaining vertices, and so on. Each step is a weighted stellar subdivision of a floral cone, consistent with the ordering \eqref{order floral cones}.
	
Edges with zero expansion factor play no role in the subdivision, since their length parameters do not appear in the $\varphi(v)$. Therefore, we formally contract all such edges for this discussion. Orient the graph $\sqC$ in such a way that every edge points away from the root. The first step is to decide which $v$ has minimal $\varphi(v)$; the candidate vertices are the immediate descendants of $v_0$: 
\begin{center}
\begin{tikzpicture}[scale=0.8]
	\draw[fill=black] (0,0) circle[radius=2pt];
	\draw (0,0) node[left]{\small$v_1$};
	\draw[fill=black] (0,0) -- (1,-2);
	\draw (0.5,-1) node[left]{\small$\varphi(v_1)$};
		
	\draw (0,0) -- (-0.25,0.5);
	\draw[fill=black] (-0.3,0.6) circle[radius=0.5pt];
	\draw[fill=black] (-0.35,0.7) circle[radius=0.5pt];
	\draw (0,0) -- (0.25,0.5);
	\draw[fill=black] (0.3,0.6) circle[radius=0.5pt];
	\draw[fill=black] (0.35,0.7) circle[radius=0.5pt];

	\draw (1,0) node{$\ldots$};
	
	\draw[fill=black] (2,0) circle[radius=2pt];
	\draw (2,0) node[right]{\small$v_r$};
	\draw[fill=black] (2,0) -- (1,-2);
	\draw (1.5,-1) node[right]{\small$\varphi(v_r)$};
	
	\draw (2,0) -- (1.75,0.5);
	\draw[fill=black] (1.7,0.6) circle[radius=0.5pt];
	\draw[fill=black] (1.65,0.7) circle[radius=0.5pt];
	\draw (2,0) -- (2.25,0.5);
	\draw[fill=black] (2.3,0.6) circle[radius=0.5pt];
	\draw[fill=black] (2.35,0.7) circle[radius=0.5pt];
	
	\draw[fill=black] (1,-2) circle[radius=2pt];
	\draw (1,-2) node[left]{\small$v_0$};
	\draw[->] (1,-2) -- (1,-2.5);
	\draw (1.05,-2.45) node[below]{\small$x_1$};
\end{tikzpicture}	
\end{center}	
Setting all coordinates other than $\varphi(v_1),\ldots,\varphi(v_r)$ to zero, we obtain the floral subcone:
\begin{center}
\begin{tikzpicture}[scale=0.8]
	\draw[fill=black] (0,0) circle[radius=2pt];
	\draw (0,0) node[left]{\small$v_1$};
	\draw[fill=black] (0,0) -- (1,-2);
	\draw (0.5,-1) node[left]{\small$\varphi(v_1)$};
		
	\draw (1,0) node{$\ldots$};
	
	\draw[fill=black] (2,0) circle[radius=2pt];
	\draw (2,0) node[right]{\small$v_r$};
	\draw[fill=black] (2,0) -- (1,-2);
	\draw (1.5,-1) node[right]{\small$\varphi(v_r)$};
	
	\draw[fill=black] (1,-2) circle[radius=2pt];
	\draw (1,-2) node[left]{\small$v_0$};
	\draw[->] (1,-2) -- (1,-2.5);
	\draw (1.05,-2.45) node[below]{\small$x_1$};
\end{tikzpicture}	
\end{center}	
The weighted stellar subdivision of $\sigma$ along this floral subcone subdivides $\sigma$ into cones, on each of which we have
\begin{equation*} \text{min}(\varphi(v_1),\ldots,\varphi(v_n)) = \varphi(v_i) \end{equation*}
for some $i$. The weights are determined by the edge expansion factors, noting that the $\varphi(v_i)$ may not be primitive in $\sigma$. On each cone of the subdivision there is a minimal vertex of $\sqC$. This forms the base of the induction.
	
For the induction step, choose a cone $\rho$ of the subdivision constructed so far; to simplify notation, we assume that $\rho$ is maximal. On this cone we have a total ordering of a subset $\{ u_1,\ldots,u_k\}$ of the vertices of $\sqC$:
\begin{equation} \label{eqn: order so far} \varphi(u_1) < \ldots < \varphi(u_k) < \varphi(v) \ \ \text{ for\,\, } v \not\in \{ u_1,\ldots,u_k,v_0\}.
	\end{equation}
Suppose that at the previous step we had taken a weighted stellar subdivision along a floral cone
\begin{equation}\label{first floral cone}
\begin{tikzpicture}[baseline=(current  bounding  box.center),scale=0.8]
	\draw[fill=black] (0,0) circle[radius=2pt];
	\draw (0,0) node[left]{\small$v_1$};
	\draw (0,0) node[above]{\small$\beta_1$};
	\draw[fill=black] (0,0) -- (1,-2);
	\draw (0.5,-1) node[left]{\small$\varphi(v_1)$};
		
	\draw (1,0) node{$\ldots$};
	
	\draw[fill=black] (2,0) circle[radius=2pt];
	\draw (2,0) node[right]{\small$v_r$};
	\draw (2,0) node[above]{\small$\beta_r$};
	\draw[fill=black] (2,0) -- (1,-2);
	\draw (1.5,-1) node[right]{\small$\varphi(v_r)$};
	
	\draw[fill=black] (1,-2) circle[radius=2pt];
	\draw (1,-2) node[left]{\small$v_0$};
	\draw (1,-2) node[right]{\small$\beta_0$};
	\draw[->] (1,-2) -- (1,-2.5);
	\draw (1.05,-2.45) node[below]{\small$x_1$};
\end{tikzpicture}	
\end{equation}
and that (without loss of generality) $\rho$ is the cone of this subdivision on which $\varphi(v_1)$ is minimal (that is, $v_1=u_k$ in \eqref{eqn: order so far}).

The candidates for the next-smallest vertex of $\sqC$ comprise the vertices $v_2,\ldots,v_r$ along with any immediate descendants of $v_1$; denote these by $w_1,\ldots,w_s$. The following picture describes $\sqC$:
\begin{center}
\begin{tikzpicture}[scale=0.8]
\draw[fill] (0,0) circle[radius=2pt];
\draw (0,0) node[left]{\small$v_0$};
\draw[->] (0,0) -- (0,-0.5);
\draw (0.05,-0.5) node[below]{\small$x_1$};

\draw (0,0) -- (-0.5,0.5);
\draw[fill] (-0.6,0.6) circle[radius=0.5pt];
\draw[fill] (-0.7,0.7) circle[radius=0.5pt];
\draw (0,0) -- (0.5,0.5);
\draw[fill] (0.6,0.6) circle[radius=0.5pt];
\draw[fill] (0.7,0.7) circle[radius=0.5pt];
\draw (0,0.5) node[]{$\ldots$};

\draw[fill] (-2,2) circle[radius=2pt];
\draw (-2,2) node[left]{\small$v_1$};
\draw (-2,2) -- (-1.5,1.5);
\draw[fill] (-1.4,1.4) circle[radius=0.5pt];
\draw[fill] (-1.3,1.3) circle[radius=0.5pt];


\draw (-2,2) -- (-2.75,3);
\draw[fill] (-2.75,3) circle[radius=2pt];
\draw (-2.75,3) node[left]{\small$w_1$};
\draw[fill] (-2.75,3.2) circle[radius=0.5pt];
\draw[fill] (-2.75,3.3) circle[radius=0.5pt];

\draw (-2,3) node[]{$\ldots$};

\draw (-2,2) -- (-1.25,3);
\draw[fill] (-1.25,3) circle[radius=2pt];
\draw (-1.25,3) node[right]{\small$w_s$};
\draw[fill] (-1.25,3.2) circle[radius=0.5pt];
\draw[fill] (-1.25,3.3) circle[radius=0.5pt];

\draw[fill] (0,2) circle[radius=2pt];
\draw[fill] (0,2.2) circle[radius=0.5pt];
\draw[fill] (0,2.3) circle[radius=0.5pt];
\draw (0,2) node[left]{\small$v_2$};
\draw (0,2) -- (0,1.5);
\draw[fill] (0,1.4) circle[radius=0.5pt];
\draw[fill] (0,1.3) circle[radius=0.5pt];

\draw (1,2) node[]{$\ldots$};

\draw[fill] (2,2) circle[radius=2pt];
\draw[fill] (2,2.2) circle[radius=0.5pt];
\draw[fill] (2,2.3) circle[radius=0.5pt];
\draw (2,2) node[right]{\small$v_r$};
\draw (2,2) -- (1.5,1.5);
\draw[fill] (1.4,1.4) circle[radius=0.5pt];
\draw[fill] (1.3,1.3) circle[radius=0.5pt];

\end{tikzpicture}	
\end{center}	
The following functions then form part of a coordinate system for the cone $\rho$
\begin{equation}\label{params} \varphi(v_2) - \varphi(v_1), \ldots, \varphi(v_r) - \varphi(v_1), \varphi(w_1) - \varphi(v_1), \ldots, \varphi(w_s) - \varphi(v_1)\end{equation}
(geometrically, the curve is destabilised by slicing it with the circle of radius $\varphi(v_1)$, and the parameter $\varphi(v_i)-\varphi(v_1)$ is the length of the final edge segment preceding the vertex $v_i$). Set all parameters other than \eqref{params} to zero to obtain the following floral subcone:
\begin{equation}\label{second floral cone}
\begin{tikzpicture}[baseline=(current  bounding  box.center),scale=0.8]
\draw[fill] (0,0) circle[radius=2pt];
\draw (0,0) node[left]{\small$v_0$};
\draw (0.1,0) node[right]{\small$\beta_0+\beta_1$};
\draw[->] (0,0) -- (0,-0.5);
\draw (0.05,-0.5) node[below]{\small$x_1$};

\draw (0,0) -- (-2,1.5);
\draw[fill] (-2,1.5) circle[radius=2pt];
\draw (-2,1.5) node[left]{\small$w_1$};

\draw (-1.5,1.5) node[]{\small$\ldots$};

\draw (0,0) -- (-0.5,1.5);
\draw[fill] (-0.5,1.5) circle[radius=2pt];
\draw (-0.5,1.5) node[left]{\small$w_s$};

\draw (0,0) -- (0.5,1.5);
\draw[fill] (0.5,1.5) circle[radius=2pt];
\draw (0.5,1.5) node[right]{\small$v_2$};

\draw (1.5,1.5) node[]{\small$\ldots$};

\draw (0,0) -- (2,1.5);
\draw[fill] (2,1.5) circle[radius=2pt];
\draw (2,1.5) node[right]{\small$v_r$};
\end{tikzpicture}
\end{equation}
Note that, since $\varphi(v_1)=0$ on this cone, the degree of the root changes from $\beta_0$ in \eqref{first floral cone} to $\beta_0+\beta_1$ in \eqref{second floral cone}. Taking the weighted stellar subdivision along this cone corresponds to choosing a minimum amongst the parameters \eqref{params}. But of course this is equivalent to choosing a minimum amongst:
\begin{equation*}\varphi(v_2), \ldots, \varphi(v_r), \varphi(w_1), \ldots, \varphi(w_s).\end{equation*}
We have completed the induction step of the construction. Either $\beta_1 > 0$, in which case $\beta_0 < \beta_0+\beta_1$. Otherwise $\beta_1=0$, and so by stability we have either $s \geq 2$ and so $r < s + r-1$, or $s=1$ in which case $v_1$ must contain the marking $x_2$, which lies on the vertex $v_0$ once we set $\varphi(v_1)=0$. In every case, we see that the floral locus \eqref{second floral cone} appears strictly later than \eqref{first floral cone} in our ordering \eqref{order floral cones}.
\end{proof}

\begin{remark} \label{rmk: not all floral cones} We note $\Tup_{0,2}(X,\beta)^\dag \to \Tup_{0,2}(X,\beta)$ is not a weighted stellar subdivision along \emph{every} floral cone. It is obtained by subdividing along those  floral cones for which each edge of the tropical curve is assigned a non-zero formal expansion factor with respect to $D_1$; equivalently, those floral cones for which $D_1 \cdot \beta_i > 0$ for $i \in \{1,\ldots,r\}$. This follows from the formal contraction of edges with zero expansion factor, carried out in the proof above.\end{remark}

\begin{example} \label{example subdivision} Take $X=\PP^n$ with $D_1=H_1$ a hyperplane, and consider the $4$-dimensional cone $\rho~\in~\Tup_{0,2}(\PP^n,3)$ indexed by the following combinatorial type
\begin{center}
\begin{tikzpicture}[scale=0.8]
	\draw[fill=black] (1,-2) circle[radius=2pt];
	\draw (1,-2) node[right]{\small$v_0$};
	\draw [blue] (1,-2) node[left]{\footnotesize$0$};
	\draw[->] (1,-2) -- (1.3,-2.5);
	\draw (1.4,-2.45) node[below]{\small$x_2$};
	\draw [->] (1,-2) -- (0.7,-2.5);
	\draw (0.6,-2.45) node[below]{\small$x_1$};
	
	\draw[fill=black] (0,0) circle[radius=2pt];
	\draw (0,0) node[left]{\small$v_1$};
	\draw [blue] (0,0) node[right]{\footnotesize$1$};
	\draw[fill=black] (0,0) -- (1,-2);
	\draw (0.5,-1) node[left]{\small$e_1$};
	
	\draw[fill=black] (2,0) circle[radius=2pt];
	\draw (2,0) node[right]{\small$v_2$};
	\draw [blue] (2,0) node[left]{\footnotesize$0$};
	\draw[fill=black] (2,0) -- (1,-2);
	\draw (1.5,-1) node[right]{\small$e_2$};
	
	\draw [fill=black] (1.25,1.5) circle[radius=2pt];
	\draw (1.25,1.5) node[left]{\small$v_3$};
	\draw [blue] (1.25,1.5) node[right]{\footnotesize$1$};
	\draw (2,0) -- (1.25,1.5);
	\draw (1.7,0.8) node[left]{\small$e_3$};
	
	\draw [fill=black] (2.75,1.5) circle[radius=2pt];
	\draw (2.75,1.5) node[right]{\small$v_4$};
	\draw [blue] (2.75,1.5) node[left]{\footnotesize$1$};
	\draw (2,0) -- (2.75,1.5);
	\draw (2.4,0.8) node[right]{\small$e_4$};
\end{tikzpicture}	
\end{center}
where the degree data is given in blue. We show how the above procedure produces the radial alignment subdivision of $\rho$. We assign formal expansion factors to the edges, which in this case gives:
\begin{center}
\begin{tikzpicture}[scale=0.8]
	\draw[fill=black] (1,-2) circle[radius=2pt];
	\draw (1,-2) node[right]{\small$v_0$};
	\draw [blue] (1,-2) node[left]{\footnotesize$0$};
	\draw[->] (1,-2) -- (1.3,-2.5);
	\draw (1.4,-2.45) node[below]{\small$x_2$};
	\draw [->] (1,-2) -- (0.7,-2.5);
	\draw (0.6,-2.45) node[below]{\small$x_1$};
	
	\draw[fill=black] (0,0) circle[radius=2pt];
	\draw (0,0) node[left]{\small$v_1$};
	\draw [blue] (0,0) node[right]{\footnotesize$1$};
	\draw[fill=black] (0,0) -- (1,-2);
	\draw (0.5,-1) node[left]{\small$e_1$};
	\draw [red] (0.4,-0.9) node[right]{\footnotesize$1$};
	
	\draw[fill=black] (2,0) circle[radius=2pt];
	\draw (2,0) node[right]{\small$v_2$};
	\draw [blue] (2,0) node[left]{\footnotesize$0$};
	\draw[fill=black] (2,0) -- (1,-2);
	\draw (1.5,-1) node[right]{\small$e_2$};
	\draw [red] (1.6,-0.9) node[left]{\footnotesize$2$};
	
	\draw [fill=black] (1.25,1.5) circle[radius=2pt];
	\draw (1.25,1.5) node[left]{\small$v_3$};
	\draw [blue] (1.25,1.5) node[right]{\footnotesize$1$};
	\draw (2,0) -- (1.25,1.5);
	\draw (1.7,0.8) node[left]{\small$e_3$};
	\draw [red] (1.5,0.9) node[right]{\footnotesize$1$};
	
	\draw [fill=black] (2.75,1.5) circle[radius=2pt];
	\draw (2.75,1.5) node[right]{\small$v_4$};
	\draw [blue] (2.75,1.5) node[left]{\footnotesize$1$};
	\draw (2,0) -- (2.75,1.5);
	\draw (2.4,0.8) node[right]{\small$e_4$};
	\draw [red] (2.5,0.9) node[left]{\footnotesize$1$};
\end{tikzpicture}	
\end{center}
The distances from the root vertex are then given by:
\begin{equation*}\varphi(v_1) = e_1, \, \varphi(v_2) = 2e_2, \, \varphi(v_3) = 2e_2 + e_3, \, \varphi(v_4) = 2e_2 + e_4.\end{equation*}
The radial alignment construction subdivides $\rho$ into cones on which these quantities are totally ordered. Following the process outlined in the proof of Theorem~\ref{thm: iterative description}, the first step is to compare $\varphi(v_1)$ and $\varphi(v_2)$. This amounts to taking a weighted stellar subdivision along the floral subcone
\begin{center}
\begin{tikzpicture}[scale=0.8]
	\draw[fill=black] (1,-2) circle[radius=2pt];
	\draw (1,-2) node[right]{\small$v_0$};
	\draw [blue] (1,-2) node[left]{\footnotesize$0$};
	\draw[->] (1,-2) -- (1.3,-2.5);
	\draw (1.4,-2.45) node[below]{\small$x_2$};
	\draw [->] (1,-2) -- (0.7,-2.5);
	\draw (0.6,-2.45) node[below]{\small$x_1$};
	
	\draw[fill=black] (0,0) circle[radius=2pt];
	\draw [blue] (0,0) node[left]{\footnotesize$1$};
	\draw[fill=black] (0,0) -- (1,-2);
	\draw (0.5,-1) node[left]{\small$e_1$};
	
	\draw[fill=black] (2,0) circle[radius=2pt];
	\draw [blue] (2,0) node[left]{\footnotesize$2$};
	\draw[fill=black] (2,0) -- (1,-2);
	\draw (1.5,-1) node[right]{\small$e_2$};
\end{tikzpicture}	
\end{center}
obtained inside $\rho$ by setting $e_3=e_4=0$. The maximal cones of this first subdivision are $\rho_1=\{e_1 < 2e_2\}$ and $\rho_2=\{2e_2 < e_1\}$. We focus on the latter (similar arguments apply to the former). On $\rho_2$ we have $\varphi(v_2) < \varphi(v_1)$, and the next step is to select a minimum amongst $\varphi(v_1),\varphi(v_3)$ and $\varphi(v_4)$. This amounts to subdividing along the floral subcone obtained inside $\rho_2$ by setting $e_2=0$:
\begin{center}
\begin{tikzpicture}[scale=0.8]
	\draw[fill=black] (1,-2) circle[radius=2pt];
	\draw (1,-2) node[right]{\small$v_0$};
	\draw [blue] (1,-2) node[left]{\footnotesize$0$};
	\draw[->] (1,-2) -- (1.3,-2.5);
	\draw (1.4,-2.45) node[below]{\small$x_2$};
	\draw [->] (1,-2) -- (0.7,-2.5);
	\draw (0.6,-2.45) node[below]{\small$x_1$};
	
	\draw[fill=black] (-0.5,0) circle[radius=2pt];
	\draw [blue] (-0.5,0) node[left]{\footnotesize$1$};
	\draw[fill=black] (-0.5,0) -- (1,-2);
	\draw (0.25,-1) node[left]{\small$f_1$};
	
	\draw[fill=black] (1,0) circle[radius=2pt];
	\draw [fill=black] (1,-2) -- (1,0);
	\draw [blue] (1,0) node[left]{\footnotesize$1$};
	\draw (1.1,-1) node[left]{\small$e_3$};
	
	\draw[fill=black] (2.5,0) circle[radius=2pt];
	\draw [blue] (2.5,0) node[left]{\footnotesize$1$};
	\draw[fill=black] (2.5,0) -- (1,-2);
	\draw (1.75,-1) node[right]{\small$e_4$};
	
\end{tikzpicture}	
\end{center}
Here $f_1=e_1-2e_2$ forms part of the natural coordinate system on $\rho_2$. This second subdivision produces three maximal cones inside $\rho_2$, and restricting to any one of these we see that the third and final step is to subdivide along a floral subcone of type:
\begin{equation*}	
\begin{tikzpicture}[scale=0.8]
	\draw[fill=black] (1,-2) circle[radius=2pt];
	\draw (1,-2) node[right]{\small$v_0$};
	\draw [blue] (1,-2) node[left]{\footnotesize$1$};
	\draw[->] (1,-2) -- (1.3,-2.5);
	\draw (1.4,-2.45) node[below]{\small$x_2$};
	\draw [->] (1,-2) -- (0.7,-2.5);
	\draw (0.6,-2.45) node[below]{\small$x_1$};
	
	\draw[fill=black] (0,0) circle[radius=2pt];
	\draw [blue] (0,0) node[left]{\footnotesize$1$};
	\draw[fill=black] (0,0) -- (1,-2);
	
	\draw[fill=black] (2,0) circle[radius=2pt];
	\draw [blue] (2,0) node[left]{\footnotesize$1$};
	\draw[fill=black] (2,0) -- (1,-2);
	
\end{tikzpicture}	
\end{equation*}
Note that the ordering \eqref{order floral cones} of floral cones is respected. The height-$1$ slice is shown in Figure~\ref{fig example}.
\begin{figure}
\includegraphics[scale=0.3]{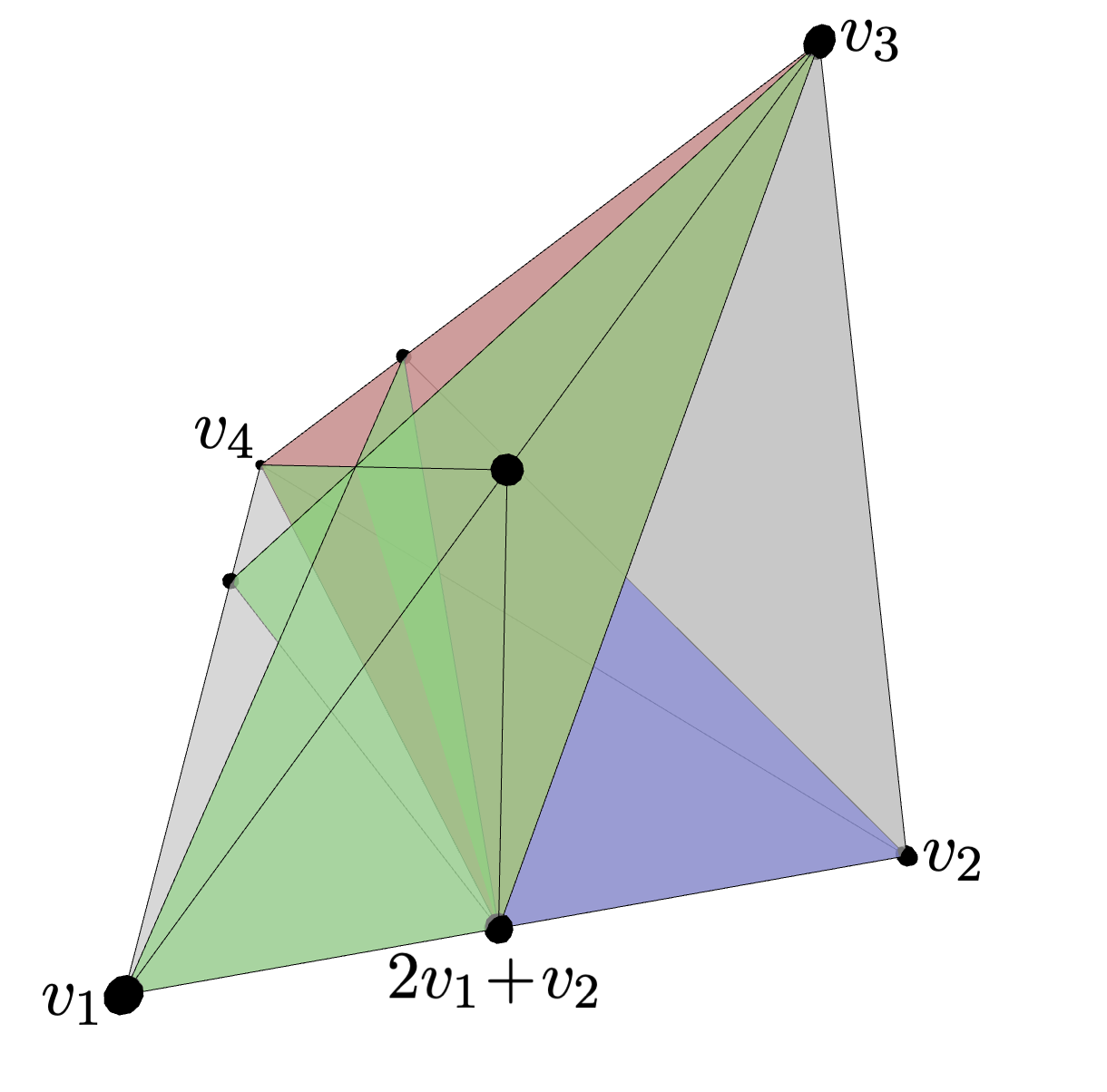}
\caption{The subdivision of $\rho$ described in Example \ref{example subdivision}. The $v_i$ are dual to the parameters $e_i$.}
\label{fig example}
\end{figure}
\end{example}

\noindent \textbf{A note on monodromy.} Floral strata typically have self-intersections. However the iterative process described above only involves blowups along strata with no self-intersections. This is the content of the following lemma; the key observation is that the self-intersection of each floral stratum is separated by taking the strict transforms along the blowups appearing earlier in the process.

\begin{lemma} Let $\sigma \in \Tup_{0,2}(X,\beta)$ be a floral cone and $\Tup_{0,2}(X,\beta)^{\ddagger}$ be the partial subdivision of $\Tup_{0,2}(X,\beta)$, such that $\sigma$ is the next floral cone to be subdivided. The stratum $Z(\sigma) \subseteq \Kup_{0,2}(X,\beta)^{\ddagger}$ has empty self-intersection.\end{lemma}

\begin{proof} Combinatorially, this says that there is no cone $\rho \in \Tup_{0,2}(X,\beta)^\ddagger$ which contains $\sigma$ as a face in two different positions. Suppose that such a cone exists. As in the proof of Theorem~\ref{thm: iterative description}, $\rho$ is indexed by a combinatorial type of tropical curve, with a partial ordering of a subset of its vertices (which without loss of generality are chosen to be strict):
\begin{equation*} \varphi(u_1) < \ldots < \varphi(u_k) < \varphi(v) \ \ \text{ for\,\, } v \not\in \{ u_1,\ldots,u_k,v_0\}.
	\end{equation*}
Let $v_1,\ldots,v_r$ denote the vertices of $\sqC$ lying immediately outside the circle of radius $\varphi(u_k)$ around $v_0$. Then a $\QQ$-coordinate system for the simplicial cone $\rho$ is given by
\begin{align} \varphi(u_1),\, \varphi(u_2) - \varphi(u_1), \, \ldots, \,\varphi(u_k)-\varphi(u_{k-1}),& \label{eqn: coord width annuli} \\
\varphi(v_1) - \varphi(u_k), \, \varphi(v_2)-\varphi(u_k), \, \ldots, \, \varphi(v_r)-\varphi(u_k), & \label{eqn: coord sigma} \\
f_1,\, \ldots, \, f_l,& \label{eqn: coord far away} \end{align}
where the functions $f_1,\ldots,f_l$ are the lengths of all edges of $\sqC$ which descend from $v_1,\ldots,v_r$.

The above parameters are presented in increasing order of distance from $v_0$. The ordering of the $\varphi(u_i)$ defines a collection of concentric circles around $v_0$, and the coordinates \eqref{eqn: coord width annuli} give the widths of the corresponding annuli. Since $\sigma$ is the next cone to subdivide, the functions \eqref{eqn: coord sigma} form a coordinate system for $\sigma$, cut out inside $\rho$ by setting the coordinates \eqref{eqn: coord width annuli} and \eqref{eqn: coord far away} to zero.

Recall we assume that $\rho$ contains $\sigma$ as a face in another position. Hence there is a different collection of coordinates on $\rho$ whose common vanishing locus also gives $\sigma$. We claim that this collection must also contain all of the coordinates \eqref{eqn: coord width annuli}; otherwise the resulting combinatorial type involves a nontrivial order relation between the edge lengths, and hence cannot give $\sigma$ since $\sigma$ was not altered by the previous subdivisions; see Remark \ref{rmk: cone not in star}.

Therefore all of the coordinates \eqref{eqn: coord width annuli} must be set to zero. For dimension reasons, at least one of the coordinates \eqref{eqn: coord sigma} must also be set to zero. As in the proof of Theorem~\ref{thm: iterative description}, it follows from stability that the resulting combinatorial type cannot be $\sigma$, since the discrete data associated to $v_0$ (curve class, number of adjacent edges, markings) must increase.\end{proof}

\section{Correcting the correspondence II: blowup formula analysis} \label{sec: blowup formula}
\noindent The previous section furnishes a sequence of birational modifications of the space of maps, resulting in a completely explicit intersection problem. We now unwind this problem, and explain how it corrects the local-logarithmic correspondence. 

\subsection{Corrected products}\label{corrected-product} Consider again the diagram of subdivided moduli spaces from \S \ref{sec: construction of subdivisions}:
\[
\begin{tikzcd}
\mathsf K_{0,2}^{\max}(X|D,\beta)^\dag \arrow{r}\arrow{d}\ar[rd,phantom,"\square"] & \mathsf K_{0,2}^{\max}(X|D_1,\beta)^\dag \arrow{d} \\
\mathsf K_{0,2}^{\max}(X|D_2,\beta)^\dag \arrow{r} & \mathsf K_{0,2}(X,\beta)^\dag.
\end{tikzcd}
\]
The entries in this diagram are all logarithmically smooth and irreducible. The fibre product is transverse over the dense locus where the logarithmic structure is trivial, yielding an equality
\begin{flalign*} && [\Kup^{\max}_{0,2}(X|D,\beta)^\dag] = [\Kup_{0,2}^{\max}(X|D_1,\beta)^\dag] \cdot [\Kup_{0,2}^{\max}(X|D_2,\beta)^\dag] && \text{in $\Kup_{0,2}(X,\beta)^\dag$} \end{flalign*}
(pushforwards have been suppressed from the notation). Pushing down along the modification $\rho \colon \Kup_{0,2}(X,\beta)^\dag \to \Kup_{0,2}(X,\beta)$, we obtain:
\begin{flalign} \label{eqn: refined product formula} && [\Kup^{\max}_{0,2}(X|D,\beta)] = \rho_\star \left( [\Kup_{0,2}^{\max}(X|D_1,\beta)^\dag] \cdot [\Kup_{0,2}^{\max}(X|D_2,\beta)^\dag] \right) && \text{in $\Kup_{0,2}(X,\beta)$}. \end{flalign}
It is natural to ask how this class relates to the naive intersection class:
\begin{flalign} \label{eqn: naive class} && [\Kup_{0,2}^{\max}(X|D_1,\beta)] \cdot [\Kup_{0,2}^{\max}(X|D_2,\beta)] && \text{in $\Kup_{0,2}(X,\beta)$}.\end{flalign}
We probe the geometry of the logarithmic modifications in order to explicitly describe the difference between these classes. The main result is Theorem \ref{thm: products comparison}, which expresses this difference as a sum of correction terms supported on excess loci.

This result compares the logarithmic and naive theories, see \cite[\S 3]{NabijouThesis}. We view this comparison as the more fundamental result; the local-logarithmic comparison arises as a consequence.

\subsection{Corrected local-logarithmic correspondence}
 Let $F \colon \Kup_{0,2}(X,\beta) \to \Kup_{0,0}(X,\beta)$ denote the forgetful morphism. The corrected local-logarithmic correspondence is obtained by pushing forward Theorem \ref{thm: products comparison} along $F$. The key is the following simple result concerning the local class:
\begin{lemma} \label{lemma: local class} The following relation holds in $\Kup_{0,0}(X,\beta)$
\begin{equation}\label{eqn: local equals naive}  F_\star \left([\Kup_{0,2}^{\max}(X|D_1,\beta)] \cdot [\Kup_{0,2}^{\max}(X|D_2,\beta)] \right) = \upalpha \cdot [\Kup_{0,0}(\OO_X(-D_1)\oplus\OO_X(-D_2),\beta)]^{\virt} \end{equation} 
where $\upalpha= (-1)^{(D_1+D_2)\cdot\beta} (D_1\cdot\beta)(D_2\cdot\beta)$. \end{lemma}
\begin{proof} This follows from a comparison of diagonals. For $i \in \{1,2\}$ consider the morphism
\begin{equation*} G_i \colon \Kup_{0,2}(X,\beta) \to \Kup_{0,1}(X,\beta) \end{equation*}
forgetting the marking $x_{\neq i}$, and let $H \colon \Kup_{0,1}(X,\beta) \to \Kup_{0,0}(X,\beta)$ be the morphism forgetting the remaining marking. Consider the tower:
\begin{center}
\begin{tikzcd}
	\Kup_{0,2}(X,\beta) \times \Kup_{0,2}(X,\beta) \ar[r,"G_1 \times G_2"] \ar[rr,bend right=12, "F\times F"] & \Kup_{0,1}(X,\beta) \times \Kup_{0,1}(X,\beta) \ar[r,"H \times H"] & \Kup_{0,0}(X,\beta) \times \Kup_{0,0}(X,\beta).
\end{tikzcd}
\end{center}
For $n \in \{0,1,2\}$ we denote the inclusion and fundamental class of the diagonal by:
\begin{equation*} \iota_{n} \colon \Kup_{0,n}(X,\beta) \hookrightarrow \Kup_{0,n}(X,\beta) \times \Kup_{0,n}(X,\beta), \qquad \Delta_{0,n} = (\iota_{n})_\star [\Kup_{0,n}(X,\beta)].\end{equation*}
These moduli spaces are unobstructed, and we have
\begin{equation} (G_1\times G_2)_\star (\Delta_{0,2}) = (H\times H)^\star (\Delta_{0,0}) \end{equation}
since this equality holds on the dense open locus where the source curve is smooth. There is also an equality:
\begin{equation*} [\Kup_{0,2}^{\max}(X|D_i,\beta)] = (G_i)^\star [\Kup_{0,1}^{\max}(X|D_i,\beta)]. \end{equation*}
From these we obtain:
\begin{align*} (\iota_0)_\star \big( F_\star \big( [\Kup_{0,2}^{\max}(& X|D_1,\beta)]  \cdot [\Kup_{0,2}^{\max}(X|D_2,\beta)] \big)\big) \\
& = (F\times F)_\star (\iota_{2})_\star \left( [\Kup_{0,2}^{\max}(X|D_1,\beta)] \cdot [\Kup_{0,2}^{\max}(X|D_2,\beta)] \right) \\
& = (F \times F)_\star \left( \left( [\Kup_{0,2}^{\max}(X|D_1,\beta)] \times [\Kup_{0,2}^{\max}(X|D_2,\beta)] \right) \cdot \Delta_{0,2} \right) \\
& = (H \times H)_\star (G_1 \times G_2)_\star \left( \left( G_1^\star[\Kup_{0,1}^{\max}(X|D_1,\beta)] \times G_2^\star [\Kup_{0,1}^{\max}(X|D_2,\beta)] \right) \cdot \Delta_{0,2} \right) \\
& = (H \times H)_\star \left( \left( [\Kup_{0,1}^{\max}(X|D_1,\beta)] \times [\Kup_{0,1}^{\max}(X|D_2,\beta)]  \right) \cdot (G_1 \times G_2)_\star (\Delta_{0,2}) \right) \\
& = (H \times H)_\star \left( \left( [\Kup_{0,1}^{\max}(X|D_1,\beta)] \times [\Kup_{0,1}^{\max}(X|D_2,\beta)] \right) \cdot (H \times H)^\star (\Delta_{0,0}) \right) \\
& = \left( H_\star [\Kup_{0,1}^{\max}(X|D_1,\beta)] \times H_\star [\Kup_{0,1}^{\max}(X|D_2,\beta)]  \right) \cdot \Delta_{0,0} \\
& = (\iota_{0})_\star \left( H_\star [\Kup_{0,1}^{\max}(X|D_1,\beta)] \cdot H_\star [\Kup_{0,1}^{\max}(X|D_2,\beta)]  \right).
\end{align*}
Letting $p \colon \Kup_{0,0}(X,\beta) \times \Kup_{0,0}(X,\beta) \to \Kup_{0,0}(X,\beta)$ be the first projection, we have $p \circ \iota_0 = \operatorname{Id}$ and so applying $p_\star$ gives:
\begin{equation*} F_\star \big( [\Kup_{0,2}^{\max}(X|D_1,\beta)]  \cdot [\Kup_{0,2}^{\max}(X|D_2,\beta)] \big) = H_\star [\Kup_{0,1}^{\max}(X|D_1,\beta)] \cdot H_\star [\Kup_{0,1}^{\max}(X|D_2,\beta)]. \end{equation*}
The claim then follows from the local-logarithmic correspondence for smooth divisors \cite{vGGR} applied to the targets $(X|D_1)$ and $(X|D_2)$, and the product formula for the local class (a consequence of the splitting of the obstruction bundle).\end{proof}
 Given this result, if we can relate the classes \eqref{eqn: refined product formula} and \eqref{eqn: naive class} on $\Kup_{0,2}(X,\beta)$, then pushing forward along $F$ will relate the logarithmic class to the local class on $\Kup_{0,0}(X,\beta)$.

\subsection{Iterated blowups: conventions and notation} \label{sec: blowup notation} The iterative description of the modification $\rho$ given in \S \ref{sec: iterative description} will allow us to access this intersection theory. The key tool is Fulton's blowup formula comparing the strict transform to the refined total transform \cite[\S 6.7]{FultonIntersectionTheory}.

By Corollary \ref{cor: iterative blowup} we may factor the modification $\rho$ as a tower of weighted blowups along strict transforms of floral strata:
\begin{equation}\label{eqn: absolute tower} \Kup_{0,2}(X,\beta)^\dag = \Kup_{0,2}(X,\beta)_m \to \Kup_{0,2}(X,\beta)_{m-1} \to \cdots \to \Kup_{0,2}(X,\beta)_0 = \Kup_{0,2}(X,\beta).\end{equation}
For $j \in \{ 1,\ldots,m\}$ we let $\sigma_j$ denote the floral cone whose weighted blowup produces $\Kup_{0,2}(X,\beta)_{j} \to \Kup_{0,2}(X,\beta)_{j-1}$. Notice that $\sigma_j$ is a cone in both $\Tup_{0,2}(X,\beta)$ and in the subdivided cone complex $\Tup_{0,2}(X,\beta)_{j-1}$, and as such represents strata in both $\Kup_{0,2}(X,\beta)$ and $\Kup_{0,2}(X,\beta)_{j-1}$. We denote these respectively by:
\begin{align*} Z({\sigma_j}) = Z(\sigma_j)_0 \subseteq \Kup_{0,2}(X,\beta), \qquad Z(\sigma_j)_{j-1} \subseteq \Kup_{0,2}(X,\beta)_{j-1}.\end{align*}
Of course, $Z(\sigma_j)_{j-1}$ is nothing but the strict transform of $Z(\sigma_j)$ under the preceding weighted blowups $\Kup_{0,2}(X,\beta)_{j-1} \to \Kup_{0,2}(X,\beta)$ (which is well-defined because of the ordering: see Remark \ref{rmk: strict transforms well defined}).

For $0 \leq j < k \leq m$ we let $\rho_{k,j}$ denote the birational morphism:
\begin{equation*} \rho_{k,j} \colon \Kup_{0,2}(X,\beta)_k \to \Kup_{0,2}(X,\beta)_j.\end{equation*}

\subsection{The blowup formula: strict and total transforms}\label{sec: excess loci}
For $i \in \{1,2\}$ there is a corresponding tower of strict transforms:
\begin{equation*} \Kup_{0,2}^{\max}(X|D_i,\beta)^\dag = \Kup_{0,2}^{\max}(X|D_i,\beta)_m \to \Kup_{0,2}^{\max}(X|D_i,\beta)_{m-1} \to \cdots \to \Kup_{0,2}^{\max}(X|D_i,\beta)_0 = \Kup_{0,2}^{\max}(X|D_i,\beta).\end{equation*}
(Note that by Corollary \ref{cor: D1 space not modified} these strict transforms are all isomorphic for $i=1$; this does not hold for $i=2$, and in any case is not important for the arguments which follow.)

Fulton's blowup formula compares the fundamental class of the strict transform to the refined fundamental class of the total transform, the latter being defined by Gysin pullback \cite[Proposition 6.7 and Example 6.7.1]{FultonIntersectionTheory}. In \S \ref{sec: weighted blowups} we explain how to extend this to weighted blowups. For each $j \in \{1,\ldots,m\}$ we obtain the following relation in $\Kup_{0,2}(X,\beta)_j$
\begin{equation}\label{eqn: blowup formula 1} \rho_{j,j-1}^\star [\Kup_{0,2}^{\max}(X|D_i,\beta)_{j-1}] = [\Kup_{0,2}^{\max}(X|D_i,\beta)_j] + A^i_j
\end{equation}
where $A^i_j$ is a correction term supported on the excess locus of the total transform:
\begin{center}
\begin{tikzcd}
\Kup_{0,2}^{\max}(X|D_i,\beta)_j \ar[r,hook] & \Kup_{0,2}^{\max}(X|D_i,\beta)_j^{\operatorname{tot}} \ar[r] \ar[d] \ar[rd,phantom,"\square"] & \Kup_{0,2}(X,\beta)_j \ar[d,"\rho_{j,j-1}"] \\
\, & \Kup_{0,2}^{\max}(X|D_i,\beta)_{j-1} \ar[r] & \Kup_{0,2}(X,\beta)_{j-1}.
\end{tikzcd}
\end{center}
We describe the terms $A^i_j$ in detail. The weighted blowup of the moduli space of ordinary stable maps has an exceptional divisor lying over the blowup centre
\begin{center}
\begin{tikzcd}
E_j \ar[r,hook] \ar[d] \ar[rd,phantom,"\square"] & \Kup_{0,2}(X,\beta)_j \ar[d] \\
Z(\sigma_j)_{j-1} \ar[r,hook] & \Kup_{0,2}(X,\beta)_{j-1}	
\end{tikzcd}	
\end{center}
and the excess locus $F^i_j$ in the total transform of the logarithmic moduli space is obtained by fibring over this blowup centre:
\begin{center}
\begin{tikzcd}
F^i_j  \ar[r] \ar[d] \ar[rd,phantom,"\square"] & E_j \ar[d] \\
\Kup_{0,2}^{\max}(X|D_i,\beta)_{j-1} \ar[r] & \Kup_{0,2}(X,\beta)_{j-1}.
\end{tikzcd}	
\end{center}
To describe $F^i_j$ we must identify the maximal strata in $\Kup_{0,2}^{\max}(X|D_i,\beta)_{j-1}$ which map to $Z(\sigma_j)_{j-1}$. This is equivalent to identifying the minimal cones in $\Tup_{0,2}^{\max}(X|D_i,\beta)_{j-1}$ that map to the interior of $\Star(\sigma_j) \subseteq \Tup_{0,2}(X,\beta)_{j-1}$.

\subsubsection{Excess locus for $D_1$} For $i=1$ these cones be identified very explicitly. We expect the following result to be extremely useful for future applications of the rank reduction technique. Recall that associated to the floral cone $\sigma_j \in \Tup_{0,2}(X,\beta)$ there is a unique cone
\begin{equation*} \tau_j \in \Tup_{0,2}^{\max}(X|D_1,\beta) \end{equation*}
obtained by assigning formal expansion factors to the edges of the tropical curve and requiring all non-root vertices to map to $0 \in \RR_{\geq 0}$:
\begin{center}
\begin{tikzpicture}

	\draw[fill=black] (0,0) circle[radius=2pt];
	\draw (0,0) node[left]{\small$\beta_1$};
	\draw[fill=black] (0,0) -- (1,-2);
		
	\draw (1,0) node{$\ldots$};
	
	\draw[fill=black] (2,0) circle[radius=2pt];
	\draw (2,0) node[right]{\small$\beta_r$};
	\draw[fill=black] (2,0) -- (1,-2);
	
	\draw[fill=black] (1,-2) circle[radius=2pt];
	\draw (1,-2) node[left]{\small$\beta_0$};
	\draw[->] (1,-2) -- (1,-2.5);
	\draw (1.05,-2.45) node[below]{\small$x_1$};
	
	\draw (1,-3.5) node[below]{$\sigma_j$};
	
	
	
	
	\draw (3.5,-1) node{\large$\rightsquigarrow$};
	
	
	\draw [fill=black] (5,0) circle[radius=2pt];
	\draw (5,0) -- (7,-0.85);
	\draw [blue] (6,-0.45) node[above]{\small$m_1$};
	
	\draw (5,-0.75) node{\vdots};
	
	\draw [fill=black] (5,-1.75) circle[radius=2pt];
	\draw (5,-1.75) -- (7,-0.85);
	\draw [blue] (6,-1.35) node[below]{\small$m_r$};
	
	\draw [fill=black] (7,-0.85) circle[radius=2pt];
	\draw [->] (7,-0.85) -- (8,-0.85);
	\draw (8,-0.8) node[above]{\small$x_1$};
	
	\draw[fill=blue] (5,-2.5) circle[radius=2pt];
	\draw[blue] (5,-2.5) node[below]{\SMALL$0$};
	\draw[->,blue] (5,-2.5) -- (8,-2.5);	
	\draw[blue] (7,-2.5) node{x};
	\draw [blue] (8,-2.5) node[above]{\small$D_1$};
	
	\draw (6.5,-3.5) node[below]{$\tau_j$};
	
\end{tikzpicture}	
\end{center}	
Here each expansion factor $m_i = D_1 \cdot \beta_i$ is non-zero (see Remark \ref{rmk: not all floral cones}) and so $\dim\tau_j = 1$. We refer to $\tau_j$ as a \textbf{comb cone}. Clearly we have $\tau_j \to \sigma_j$ and both $\tau_j$ and $\sigma_j$ are unaffected by the first $j-1$ weighted stellar subdivisions.

\begin{theorem} \label{thm: minimal D1 cone} The cone $\tau_j$ in $\Tup^{\max}_{0,2}(X|D_1,\beta)_{j-1}$ is the unique minimal cone mapping into $\Star(\sigma_j) \subseteq \Tup_{0,2}(X,\beta)_{j-1}$ i.e. every cone $\theta$ in $\Tup^{\max}_{0,2}(X|D_1,\beta)_{j-1}$ mapping into $\Star(\sigma_j)$ contains $\tau_j$ as a face.
\end{theorem}

\begin{remark} The analogous statement on the initial moduli space $\Tup_{0,2}(X,\beta)$ fails to hold; a simple counterexample with $(X|D_1)=(\PP^n|H)$ is given by

\begin{tabu}[h!]{p{0.3\textwidth} p{0.3\textwidth} p{0.3\textwidth}}

	\centering
	\begin{tikzpicture}
	\draw[fill=black] (0,0) circle[radius=2pt];
	\draw[blue] (0,0) node[left]{\small$1$};
	\draw[fill=black] (0,0) -- (1,-2);
	\draw (0.5,-1) node[left]{\small$e_1$};
	
	\draw[fill=black] (2,0) circle[radius=2pt];
	\draw[blue] (2,0) node[right]{\small$1$};
	\draw[fill=black] (2,0) -- (1,-2);
	\draw (1.5,-1) node[right]{\small$e_2$};
	
	\draw[fill=black] (1,-2) circle[radius=2pt];
	\draw[blue] (1,-2) node[left]{\small$2$};
	\draw[->] (1,-2) -- (1,-2.5);
	\draw (1.05,-2.45) node[below]{\small$x_1$};
	
	\draw (1,-3) node[below]{$\sigma_j$};
	\end{tikzpicture}

&

	\centering
	\begin{tikzpicture}
		\draw [fill=black] (5,-0.25) circle[radius=2pt];
		\draw [blue] (5,-0.25) node[left]{\small$1$};
		\draw (5,-0.25) -- (7,-0.85);
		\draw (6,-0.55) node[above]{\small$e_1$};
	
		\draw [fill=black] (5,-1.5) circle[radius=2pt];
		\draw [blue] (5,-1.5) node[left]{\small$1$};
		\draw (5,-1.5) -- (7,-0.85);
		\draw (6,-1.2) node[below]{\small$e_2$};
	
	\draw [fill=black] (7,-0.85) circle[radius=2pt];
	\draw [blue] (7,-0.85) node[above]{\small$2$};
	\draw [->] (7,-0.85) -- (8,-0.85);
	\draw (8,-0.8) node[above]{\small$x_1$};
	
	\draw[fill=blue] (5,-2.5) circle[radius=2pt];
	\draw[blue] (5,-2.5) node[below]{\SMALL$0$};
	\draw[->,blue] (5,-2.5) -- (8,-2.5);
	\draw[blue] (7,-2.5) node{x};
	\draw [blue] (8,-2.5) node[above]{\small$D_1$};
	
	\draw (6.5,-3) node[below]{$\tau_j$};
	\end{tikzpicture}
	
 &
 
	\centering
	\begin{tikzpicture}
	\draw [fill=black] (5,0) circle[radius=2pt];
	\draw [blue] (5,0) node[left]{\small$1$};
	\draw (5,0) -- (7,-0.85);
	\draw (6,-0.45) node[above]{\small$e_1$};
	
	\draw [fill=black] (5,-0.85) circle[radius=2pt];
	\draw [blue] (5,-0.85) node[left]{\small$1$};
	\draw (5,-0.85) -- (7,-0.85);
	\draw (6,-0.925) node[above]{\small$e_2$};
	
	\draw [fill=black] (5,-1.75) circle[radius=2pt];
	\draw [blue] (5,-1.75) node[left]{\small$2$};
	\draw (5,-1.75) -- (7,-0.85);
	\draw (6,-1.3) node[below]{\small$f$};
	
	\draw [fill=black] (7,-0.85) circle[radius=2pt];
	\draw [blue] (7,-0.85) node[above]{\small$0$};
	\draw [->] (7,-0.85) -- (8,-0.85);
	\draw (8,-0.8) node[above]{\small$x_1$};
	
	\draw[fill=blue] (5,-2.5) circle[radius=2pt];
	\draw[blue] (5,-2.5) node[below]{\SMALL$0$};
	\draw[->,blue] (5,-2.5) -- (8,-2.5);
	\draw[blue] (7,-2.5) node{x};
	\draw [blue] (8,-2.5) node[above]{\small$D_1$};
	
	\draw (6.5,-3) node[below]{$\theta$};
\end{tikzpicture}
\end{tabu}\\
\noindent where the curve classes are indicated in blue and the tropical edge lengths in black. Here $\theta$ maps into $\Star(\sigma_j)$ but does not contain $\tau_j$ as a face. The issue is that when we try to specialise $\theta$ by setting $f=0$, the edge length relations $e_1=e_2=2f$ force $e_1=e_2=0$ as well.

Nevertheless, we claim that after performing the first $j-1$ subdivisions, the statement holds. The reason for this is that these subdivisions cause the star of $\sigma_j$ to become smaller, so eventually every such minimal cone $\theta$ maps outside the star; this occurs because the tropical edge length relations for continuity of the tropical map to $\RR_{\geq 0}$ become incompatible with the radial alignment inequalities. Geometrically, this means that every such maximal stratum $Z(\theta)$ becomes separated from $Z(\sigma_j)$ by the process of blowing up and taking strict transforms.

This process is visible in the above example; the first subdivision reduces the star of $\sigma_j$ to the locus where $2f < e_1, e_2$, and this is incompatible with the continuity relations on $\theta$.
\end{remark}

\begin{proof}
Suppose that we are given a cone $\rho$ in $\Star(\sigma_j$):
	\begin{equation*} \sigma_j \subseteq \rho \in \Tup_{0,2}(X,\beta)_{j-1}.\end{equation*}
The cone $\rho$ is indexed by the combinatorial type of a more degenerate tropical curve, together with a partial radial alignment of its vertices. Crucially, this partial alignment is such that, in the iterative subdivision process described in the proof of Theorem \ref{thm: iterative description}, $\sigma_j$ is the next floral cone at which we must subdivide. Now consider a cone
\begin{equation*} \theta \in \Tup^{\max}_{0,2}(X|D_1,\beta)_{j-1} = \Tup^{\max}_{0,2}(X|D_1,\beta) \end{equation*}
which maps into $\rho.$ We wish to prove that $\theta$ contains the comb cone $\tau_j$ as a face.

In order for $\theta \to \rho$ their combinatorial types must have the same source curve, after possibly removing some $2$-valent vertices. The vertex alignments/orderings induced by the combinatorial types must be compatible. This means that, if the combinatorial type of $\theta$ has $k$ target expansion levels, then $\rho$ aligns precisely those vertices mapped to the final $l$ levels (for some $l < k$),  in the same order as prescribed by $\theta$.

As noted, the partial ordering on $\rho$ is such that in the iterative subdivision process, $\sigma_j$ is the next cone at which we must subdivide. Following the procedure in the proof of Theorem \ref{thm: iterative description}, this means the following: after we set $\varphi(u_i)=0$ for every vertex $u_i$ appearing in the partial alignment, we obtain a tropical curve such that the immediate descendants of $v_0$ give the type of $\sigma_j$, in the sense that setting all further edge lengths to zero specialises to the cone $\sigma_j$.

On $\theta$, the tropical parameters are given by the target edge lengths, and the above description of the shape of the tropical curve implies that when we set every target edge length except the $(k-l-1)$st to zero, we specialise to the type of $\tau_j$. This shows $\tau_j$ is a face of $\theta$, as claimed.	
\end{proof}
By Theorem \ref{thm: minimal D1 cone}, the excess locus $F^1_j$ in the total transform of $\Kup_{0,2}^{\max}(X|D_1,\beta)_{j-1}$ is given by:
\begin{equation*} F^1_j = Z(\tau_j)_{j-1} \times_{Z(\sigma_j)_{j-1}} E_j.\end{equation*}
This is an explicit weighted projective bundle, of dimension $r-1$ over the comb stratum $Z(\tau_j)_{j-1} = Z(\tau_j)$ (where $r$ is the number of leaves of the source curve in $\sigma_j$). It has excess dimension $r-2$, and the blowup formula takes the form
\begin{equation}\label{eqn: blowup formula D1} \rho_{j,j-1}^\star [\Kup_{0,2}^{\max}(X|D_1,\beta)_{j-1}] = [\Kup_{0,2}^{\max}(X|D_1,\beta)_{j}] + \gamma^1_j \cap [F_j^1]
\end{equation}
where $\gamma^1_j$ is an excess class of codimension $r-2$. This is obtained from Chern and Segre classes of strata, and hence can be described algorithmically in terms of tautological classes, see Remark~\ref{rem: effectivity}. We defer this computation to future work. 

We may now pass up the tower \eqref{eqn: absolute tower} of weighted blowups, applying the formula \eqref{eqn: blowup formula D1} at each level. At the top we obtain
\begin{flalign} \label{eqn: iterated blowup formula D1} && \rho^\star [\Kup_{0,2}^{\max}(X|D_1,\beta)] = [\Kup^{\max}_{0,2}(X|D_1,\beta)^\dag] + \sum_{j=1}^m \rho_{m,j}^\star \left( \gamma^1_j \cap [F^1_j] \right) && \end{flalign}
in $\Kup_{0,2}(X,\beta)^\dag = \Kup_{0,2}(X,\beta)_m$ (where $\rho=\rho_{m,0}$). We note that the pullback $\rho_{m,j}^\star ( \gamma^1_j \cap [F^1_j])$ is typically non-transverse, and so further excess classes will enter into the formula; however, since we will push back down along $\rho$, there is no need to describe these. This formula provides a quantitative relationship between the strict transform and the refined total transform of $\Kup_{0,2}^{\max}(X|D_1,\beta)$.

\subsubsection{Excess locus for $D_2$} For $i=2$ there is no result analogous to Theorem~\ref{thm: minimal D1 cone}, and there are typically several minimal cones
\begin{equation*} \theta \in \Tup^{\max}_{0,2}(X|D_2,\beta)_{j-1} \end{equation*}
which map into $\Star(\sigma_j)$. It is possible to construct examples where these minimal cones have different dimensions. This is not surprising; the subdivision of $\Tup_{0,2}(X,\beta)$ is constructed with reference to $D_1$, and so is typically insensitive to the geometry of $D_2$.

Let us denote the minimal cones of $\Tup_{0,2}^{\max}(X|D_2,\beta)_{j-1}$ mapping into $\Star(\sigma_j)$ by:
\begin{equation*} \theta_j(1), \ldots, \theta_j(l_j). \end{equation*}
We note that some of these cones may be exceptional, i.e. the corresponding strata
\begin{equation*} Z(\theta_j(k))_{j-1} \subseteq \Kup_{0,2}^{\max}(X|D_2,\beta)_{j-1} \end{equation*}
may have positive dimension over $\Kup_{0,2}^{\max}(X|D_2,\beta)$. The irreducible components of the excess locus $F^2_j$ are the fibre products:
\begin{equation*} F^2_j(k) = Z(\theta_j(k))_{j-1} \times_{Z(\sigma_j)_{j-1}} E_j. \end{equation*}
As before, each such component is an explicit weighted projective bundle, of dimension $r-1$ over the stratum $Z(\theta_j(k))_{j-1}$. Its excess dimension is determined by the dimension of the cone $\theta_j(k)$, and is at most $r-2$. The excess class $\gamma_j^2$ may be written as a sum of classes pushed forward from the irreducible components (such an expression is necessarily non-unique, but see Remark~\ref{rem: effectivity}). We arrive at the correction term:
\begin{equation}\label{eqn: blowup formula D2} \rho_{j,j-1}^\star [\Kup_{0,2}^{\max}(X|D_2,\beta)_{j-1}] = [\Kup_{0,2}^{\max}(X|D_2,\beta)_{j}] + \Sigma_{k=1}^{l_j} \gamma^2_j(k) \cap [F_j^2(k)]. \end{equation}
Applying this iteratively, at each level of the tower \eqref{eqn: absolute tower}, we obtain
\begin{flalign} \label{eqn: iterated blowup formula D2} && \rho^\star [\Kup_{0,2}^{\max}(X|D_2,\beta)] = [\Kup^{\max}_{0,2}(X|D_2,\beta)^\dag] + \sum_{j=1}^m \sum_{k=1}^{l_j} \rho_{m,j}^\star \left( \gamma^2_j(k) \cap [F^2_j(k)] \right) && \end{flalign}
in $\Kup_{0,2}(X,\beta)^\dag = \Kup_{0,2}(X,\beta)_m$.

\subsection{Corrected product formula and local-logarithmic correspondence} \label{sec: corrected correspondence} Expressions \eqref{eqn: iterated blowup formula D1} and \eqref{eqn: iterated blowup formula D2} allow us to compare the intersections before and after blowing up. We obtain:
\begin{align*} \rho^\star \big( [\Kup_{0,2}^{\max}(X|D_1,\beta)] \cdot [\Kup_{0,2}^{\max}(X|D_2,\beta)] \big) = \, & [\Kup_{0,2}^{\max}(X|D_1,\beta)^\dag]\cdot[\Kup^{\max}_{0,2}(X|D_2,\beta)^\dag]\,+ \\
& \rho^\star [\Kup_{0,2}^{\max}(X|D_1,\beta)] \cdot  \sum_{j=1}^m \sum_{k=1}^{l_j} \rho_{m,j}^\star \left( \gamma^2_j(k) \cap [F^2_j(k)] \right) + \\
& \rho^\star [\Kup_{0,2}^{\max}(X|D_2,\beta)] \cdot \sum_{j=1}^m \rho_{m,j}^\star \left( \gamma^1_j \cap [F^1_j] \right) - \\
& \sum_{j_1=1}^m \sum_{j_2=1}^m \left( \rho_{m,j_1}^\star \left(\gamma^1_{j_1} \cap [F^1_{j_1}] \right) \cdot \sum_{k=1}^{l_{j_2}} \rho_{m,j_2}^\star \left( \gamma^2_{j_2}(k) \cap [F^2_{j_2}(k)] \right) \right). \end{align*}
We now apply $\rho_\star$ to this. The first term on the right-hand side pushes forward to
\begin{equation*} [\Kup_{0,2}^{\max}(X|D,\beta)] \end{equation*}
by \eqref{eqn: refined product formula}. On the other hand, the second and third terms pushforward to zero by the projection formula, since the individual correction terms vanish under pushforward:
\begin{equation*} (\rho_{j,j-1})_\star \left(\gamma^1_{j} \cap [F^1_{j}] \right) = 0 , \qquad  (\rho_{j,j-1})_\star \left( \gamma^2_{j}(k) \cap [F^2_{j}(k)] \right)=0. \end{equation*}
Similarly, in the final term only products of correction terms with $j_1=j_2$ can survive. We obtain the comparison of logarithmic and naive virtual classes:
\begin{theorem} \label{thm: products comparison}The following relation holds in $\Kup_{0,2}(X,\beta)$:
\begin{align*} [\Kup_{0,2}^{\max}(X|D_1,\beta)] \cdot[\Kup_{0,2}^{\max}(X|D_2,\beta)] = [\Kup_{0,2}^{\max}(X|D,\beta)] - \sum_{j=1}^m (\rho_{j,0})_\star \left( \sum_{k=1}^{l_{j}} (\gamma^1_{j}\cap [F^1_{j}])\cdot (\gamma^2_j(k) \cap [F^2_{j}(k)]) \right). \end{align*} \end{theorem}
\noindent Pushforward along the morphism $F$ produces the corrected local-logarithmic correspondence, by Lemma \ref{lemma: local class}.

\begin{remark}[Implementation]\label{rem: effectivity}
The correction terms are well-understood as tautological classes. Excess loci are always weighted projective bundles over logarithmic strata, obtained by imposing edge length equalities in the tropical moduli. Excess \textit{classes} are handled as follows. (1) Chern classes of normal bundles to strata of the space of absolute maps are easily calculated. (2) The classes of the exceptional divisors are handled directly. (3) Segre classes of strata of (strict transforms of) the relative spaces; these are the most complex and arise as follows. The blowup centre is cut out by a monoidal ideal; the intersection with the relative space is therefore cut out by the pullback of this ideal. The Segre class is computed by Aluffi's formula for Segre classes~\cite[Theorem~1.1]{Alu16}. The output is an explicit weighted linear combination of smooth strata with tautological excess classes. The correction is therefore determined.
\end{remark}

In ongoing work, we calculate the terms explicitly and identify situations in which they vanish, establishing new cases of the numerical local-logarithmic correspondence.

\begin{remark} \label{rmk: additional markings} Theorem \ref{thm: products comparison} holds with the same proof with additional markings without tangency.\end{remark}

\subsection{Stacky subdivisions, roots and weighted blowups} \label{sec: weighted blowups} The key formula \eqref{eqn: blowup formula 1} above (along with its more specific counterparts \eqref{eqn: blowup formula D1} and \eqref{eqn: blowup formula D2}) requires a generalisation of Fulton's blowup formula to weighted blowups. For this, it is more convenient to interpret each weighted blowup as a smooth orbitoroidal embedding, rather than a logarithmically smooth toroidal embedding. We now explain this process. The basic idea is that a weighted blowup factors uniquely as a root stack followed by an ordinary blowup. We assume familiarity with the basics of toric orbifolds \cite{BorisovChenSmith, FantechiMannNironi}.

For $j \in \{1,\ldots,m\}$ the weighted stellar subdivision $\Tup_{0,2}(X,\beta)_j \to \Tup_{0,2}(X,\beta)_{j-1}$ is induced by a primitive weight vector $w_j \in \sigma_j \cap N_{\sigma_j}$ where $N_{\sigma_j}$ is the lattice of integral points. The lattice has a natural basis $v_1,\ldots,v_{r}$ dual to the edge lengths of the associated tropical curve, and we can write $w_j = m_1 v_1 + \ldots + m_{r} v_{r}$ with each $m_i > 0$. Let
\begin{equation} \label{eqn: inclusion of lattices} N_{\sigma_j}^\diamond \subseteq N_{\sigma_j} \end{equation}
be the finite-index sublattice generated by $m_1 v_1,\ldots,m_{r} v_{r}$. The triple $(\sigma_j,N_{\sigma_j},N_{\sigma_j}^\diamond)$ constitutes a stacky cone, hence corresponds to an affine toric orbifold, with isotropy given by the cokernel of the lattice inclusion \eqref{eqn: inclusion of lattices}. This globalises uniquely to a family of compatible sublattices, producing a stacky modification:
\begin{equation*} \Tup_{0,2}(X,\beta)_{j-1}^\diamond 	\to \Tup_{0,2}(X,\beta)_{j-1}.\end{equation*}
This is a stacky cone complex, i.e. a complex of stacky cones. By \cite[\S 4]{BorneVistoli} the stacky modification induces a non-representable toroidal modification:
\begin{equation*} \Kup_{0,2}(X,\beta)_{j-1}^\diamond \to \Kup_{0,2}(X,\beta)_{j-1}.\end{equation*}
This is an iterated root stack, with rooting index $m_i$ along the divisor $Z_i$ corresponding to the ray $v_i$. Let
\[ Z_i/m_i \subseteq \Kup_{0,2}(X,\beta)_{j-1}^\diamond\] 
denote the gerby divisor in the root stack; note that $m_i \cdot (Z_i/m_i)=Z_i$ which justifies the notation. The resulting space $\Kup_{0,2}(X,\beta)_{j-1}^\diamond$ is an \textbf{orbitoroidal embedding}, i.e. a pair which is locally isomorphic to a toric orbifold. 

The weight vector $w_j$ has co-ordinates $(1,\ldots,1)$ in the lattice $N_{\sigma_j}^\diamond$. We denote the stellar subdivision at $w_j$ by:
\begin{equation*} \Tup_{0,2}(X,\beta)_j \to \Tup_{0,2}(X,\beta)_{j-1}^\diamond.\end{equation*}
This produces an associated toroidal modification
\begin{equation*} \Kup_{0,2}(X,\beta)_j \to \Kup_{0,2}(X,\beta)_{j-1}^\diamond \end{equation*}
which is the blowup of the intersection of the gerby divisors $Z_i/m_i$. The composite
\begin{equation} \label{eqn: root stack followed by blowup} \Kup_{0,2}(X,\beta)_j \to \Kup_{0,2}(X,\beta)_{j-1}^\diamond \to \Kup_{0,2}(X,\beta)_{j-1} \end{equation}
is a stacky toroidal modification in the sense of \cite[\S 3.1]{Mol}, with relative coarse moduli space given by the ordinary weighted blowup. Locally, $\Kup_{0,2}(X,\beta)_j$ is the toric stack canonically associated to the simplicial toric variety obtained via the ordinary weighted blowup \cite[Theorem 4.11]{FantechiMannNironi}.

Strictly speaking, the above construction differs from the output of the weak semistable reduction algorithm, the latter being the relative coarse moduli space of the former. Crucially, however, the above construction still results in an integral and saturated morphism $\Kup_{0,2}(X|D_1,\beta)^\dag \to \Kup_{0,2}(X,\beta)^\dag$ which is all we require.

The composition \eqref{eqn: root stack followed by blowup} shows that this stacky toroidal modification is the composition of a root stack and an ordinary blowup. We obtain the weighted blowup formula by pulling back to the root stack and applying the ordinary blowup formula. Pullbacks of characteristic classes to the root stack are well-understood. Since $\Kup_{0,2}(X,\beta)$ is a smooth Deligne--Mumford stack, it follows inductively that each $\Kup_{0,2}(X,\beta)_j$ is also a smooth Deligne--Mumford stack (see Corollary~\ref{cor: iterated blowup smooth}).

\subsection{Example} \label{sec: example} We now apply the iterated blowup procedure to the plane conic example of \S \ref{sec: counter-2}, calculating the defect between local/naive and logarithmic theories. To indicate how the discussion generalises, we employ the same notation as used in \S\S \ref{sec: blowup notation}--\ref{sec: corrected correspondence}.

Consider degree $2$ logarithmic stable maps to $(\PP^2|H_1+H_2)$ with maximal tangency at two distinct markings. By \S \ref{sec: iterative description} the relevant floral cones in $\Tup_{0,2}(\PP^2,2)$ are:
\begin{equation*}
\begin{tabu}[h!]{p{0.3\textwidth} p{0.3\textwidth}}
	\centering
	\begin{tikzpicture}
	\draw[fill=black] (0,0) circle[radius=2pt];
	\draw[blue] (0,0) node[left]{\small$1$};
	\draw[fill=black] (0,0) -- (1,-2);
	\draw[->] (0,0) -- (0,-0.5);
	\draw (0.05,-0.45) node[below]{\small$x_2$};
	
	\draw[fill=black] (2,0) circle[radius=2pt];
	\draw[blue] (2,0) node[right]{\small$1$};
	\draw[fill=black] (2,0) -- (1,-2);
	
	\draw[fill=black] (1,-2) circle[radius=2pt];
	\draw[blue] (1,-2) node[left]{\small$0$};
	\draw[->] (1,-2) -- (1,-2.5);
	\draw (1.05,-2.45) node[below]{\small$x_1$};
	
	\draw (1,-3.1) node[below]{$\sigma_1$};
	\end{tikzpicture}
&

	\centering
	\begin{tikzpicture}
	\draw[fill=black] (0,0) circle[radius=2pt];
	\draw[blue] (0,0) node[left]{\small$1$};
	\draw[fill=black] (0,0) -- (1,-2);
	
	\draw[fill=black] (2,0) circle[radius=2pt];
	\draw[blue] (2,0) node[right]{\small$1$};
	\draw[fill=black] (2,0) -- (1,-2);
	
	\draw[fill=black] (1,-2) circle[radius=2pt];
	\draw[blue] (1,-2) node[left]{\small$0$};
	\draw[->] (1,-2) -- (0.7,-2.5);
	\draw (0.75,-2.45) node[below]{\small$x_1$};
	\draw[->] (1,-2) -- (1.3,-2.5);
	\draw (1.35,-2.45) node[below]{\small$x_2$};
	
	\draw (1,-3.1) node[below]{$\sigma_2$};
	\end{tikzpicture}
\end{tabu}
\end{equation*}
The iterated blowup is obtained by first blowing up $Z(\sigma_1)$, and then blowing up the strict transform of $Z(\sigma_2)$. The blowup weights are trivial because all edges have expansion factor $1$. In the notation of \S\ref{sec: blowup notation} we have:
\begin{align*} \Kup_{0,2}(\PP^2,2)_0 & = \Kup_{0,2}(\PP^2,2) \\
	\Kup_{0,2}(\PP^2,2)_1 & = \operatorname{Bl}_{Z(\sigma_1)} \Kup_{0,2}(\PP^2,2) \\
	\Kup_{0,2}(\PP^2,2)^\dag = \Kup_{0,2}(\PP^2,2)_2 & = \operatorname{Bl}_{Z(\sigma_2)_1} \operatorname{Bl}_{Z(\sigma_1)} \Kup_{0,2}(\PP^2,2)
\end{align*}
where $Z(\sigma_2)_1 \subseteq \Kup_{0,2}(\PP^2,2)_1$ is the strict transform of $Z(\sigma_2) \subseteq \Kup_{0,2}(\PP^2,2)$. Following \S \ref{sec: excess loci}, we calculate the excess loci and correction terms at each step. For $Z(\sigma_1)$, the minimal cones mapping to $\Star(\sigma_1) \subseteq \Tup_{0,2}(\PP^2,2)$ are:
\begin{equation*}
\begin{tabu}[h!]{p{0.3\textwidth} p{0.3\textwidth} p{0.3\textwidth}}
	\centering
	\begin{tikzpicture}
		\draw [fill=black] (5,-0.25) circle[radius=2pt];
		\draw [blue] (5,-0.25) node[left]{\small$1$};
		\draw (5,-0.25) -- (7,-0.85);
	
		\draw [fill=black] (5,-1.5) circle[radius=2pt];
		\draw [blue] (5,-1.5) node[left]{\small$1$};
		\draw (5,-1.5) -- (7,-0.85);
		\draw[->] (5,-1.5) -- (5,-2);
		\draw (5,-2) node[right]{\small$x_2$};
	
	\draw [fill=black] (7,-0.85) circle[radius=2pt];
	\draw [blue] (7,-0.85) node[above]{\small$0$};
	\draw [->] (7,-0.85) -- (8,-0.85);
	\draw (8,-0.8) node[above]{\small$x_1$};
	
	\draw[fill=blue] (5,-2.5) circle[radius=2pt];
	\draw[->,blue] (5,-2.5) -- (8,-2.5);
	\draw[blue] (7,-2.5) node{x};
	\draw [blue] (8,-2.5) node[above]{\small$H_1$};
	
	\draw (6.5,-3) node[below]{$\tau_1$};
	\end{tikzpicture}
	
	& 
	
		\centering
	\begin{tikzpicture}
		\draw [fill=black] (5,-0.85) circle[radius=2pt];
		\draw [blue] (5,-0.85) node[left]{\small$1$};
		\draw (5,-0.85) -- (7,-0.85);
		
		\draw [fill=black] (6,-0.85) circle[radius=2pt];
		\draw [blue] (6,-0.85) node[above]{\small$0$};
		\draw[->] (6,-0.85) -- (6,-1.35);
		\draw (6.05,-1.35) node[below]{\small$x_1$};

	\draw [fill=black] (7,-0.85) circle[radius=2pt];
	\draw [blue] (7,-0.85) node[above]{\small$1$};
	\draw [->] (7,-0.85) -- (8,-0.85);
	\draw (8,-0.8) node[above]{\small$x_2$};
	
	\draw[fill=blue] (5,-2.5) circle[radius=2pt];
	\draw[->,blue] (5,-2.5) -- (8,-2.5);
	\draw[blue] (7,-2.5) node{x};
	\draw[blue] (6,-2.5) node{x};
	\draw [blue] (8,-2.5) node[above]{\small$H_2$};
	
	\draw (6.5,-3) node[below]{$\theta_1(1)$};
	\end{tikzpicture}
	
	&
	
	\centering
		\begin{tikzpicture}
		\draw [fill=black] (5,-0.25) circle[radius=2pt];
		\draw [blue] (5,-0.25) node[left]{\small$1$};
		\draw (5,-0.25) -- (7,-0.85);
		\draw (6,-0.55) node{x};
	
		\draw [fill=black] (5,-1.5) circle[radius=2pt];
		\draw [blue] (5,-1.5) node[left]{\small$1$};
		\draw (5,-1.5) -- (6,-1.5);
		
		\draw [fill=black] (6,-1.5) circle[radius=2pt];
		\draw (6,-1.5) -- (7,-0.85);
		\draw [blue] (6,-1.5) node[above]{\small$0$};
		\draw[->] (6,-1.5) -- (6,-2);
		\draw (6,-2) node[right]{\small$x_1$};
	
	\draw [fill=black] (7,-0.85) circle[radius=2pt];
	\draw [blue] (7,-0.85) node[above]{\small$0$};
	\draw [->] (7,-0.85) -- (8,-0.85);
	\draw (8,-0.8) node[above]{\small$x_2$};
	
	\draw[fill=blue] (5,-2.5) circle[radius=2pt];
	\draw[->,blue] (5,-2.5) -- (8,-2.5);
	\draw[blue] (7,-2.5) node{x};
	\draw[blue] (6,-2.5) node{x};
	\draw [blue] (8,-2.5) node[above]{\small$H_2$};
	
	\draw (6.5,-3) node[below]{$\theta_1(2)$};
	\end{tikzpicture}
\end{tabu}
\end{equation*}
As guaranteed by Theorem~\ref{thm: minimal D1 cone}, there is a unique minimal cone $\tau_1 \leq \Tup_{0,2}^{\max}(\PP^2|H_1,2)$ mapping to $\Star(\sigma_1)$. On the other hand, we see that in this case there are two minimal cones $\theta_1(1),\theta_1(2) \leq \Tup_{0,2}^{\max}(\PP^2|H_2,2)$ mapping to $\Star(\sigma_1)$. Note in particular that $\theta_1(2)$ maps to $\Star(\sigma_1)$, but not to $\sigma_1$ itself.

The exceptional divisor $E_1 \subseteq \Kup_{0,2}(\PP^2,2)_1 = \operatorname{Bl}_{Z(\sigma_1)} \Kup_{0,2}(\PP^2,2)$ is a $\PP^1$ bundle over the stratum $Z(\sigma_1)$. Correspondingly, the excess loci
\begin{equation*} F_1^1 \subseteq \Kup_{0,2}^{\max}(\PP^2|H_1,2)_1^{\operatorname{tot}}, \quad F_1^2(1), F_1^2(2) \subseteq \Kup_{0,2}^{\max}(\PP^2|H_2,2)_1^{\operatorname{tot}}\end{equation*}
are $\PP^1$ bundles over the maximal strata mapping to $Z(\sigma_1)$:
\begin{equation*} Z(\tau_1) \subseteq \Kup_{0,2}^{\max}(\PP^2|H_1,2), \quad Z(\theta_1(1)), Z(\theta_1(2)) \subseteq \Kup_{0,2}^{\max}(\PP^2|H_2,2).\end{equation*}
Since $Z(\theta_1(1))$ and $Z(\theta_1(2))$ are strata of codimension $2$, $F_1^2(1)$ and $F_1^2(2)$ have excess dimension $-2+1=-1$, i.e. they do not carry an excess class. We conclude that the correction term arising from the first blowup vanishes.

We now blowup $Z(\sigma_2)_1$, the strict transform of $Z(\sigma_2)$. The minimal cones mapping to $\Star(\sigma_2) \subseteq \Tup_{0,2}(\PP^2,2)_1$ are:
\begin{equation*}
\begin{tabu}[h!]{p{0.3\textwidth} p{0.3\textwidth} p{0.3\textwidth}}
	\centering
	\begin{tikzpicture}
		\draw [fill=black] (5,-0.25) circle[radius=2pt];
		\draw [blue] (5,-0.25) node[left]{\small$1$};
		\draw (5,-0.25) -- (7,-0.85);
	
		\draw [fill=black] (5,-1.5) circle[radius=2pt];
		\draw [blue] (5,-1.5) node[left]{\small$1$};
		\draw (5,-1.5) -- (7,-0.85);
	
	\draw [fill=black] (7,-0.85) circle[radius=2pt];
	\draw [blue] (7,-0.85) node[above]{\small$0$};
	\draw [->] (7,-0.85) -- (8,-0.85);
	\draw (8,-0.8) node[above]{\small$x_1$};
	\draw [->] (7,-0.85) -- (7,-1.35);
	\draw (7,-1.35) node[right]{\small$x_2$};
	
	\draw[fill=blue] (5,-2.5) circle[radius=2pt];
	\draw[->,blue] (5,-2.5) -- (8,-2.5);
	\draw[blue] (7,-2.5) node{x};
	\draw [blue] (8,-2.5) node[above]{\small$H_1$};
	
	\draw (6.5,-3) node[below]{$\tau_2$};
	\end{tikzpicture}
	
	& 
	
	\centering
	\begin{tikzpicture}
		\draw [fill=black] (5,-0.25) circle[radius=2pt];
		\draw [blue] (5,-0.25) node[left]{\small$1$};
		\draw (5,-0.25) -- (7,-0.85);
	
		\draw [fill=black] (5,-1.5) circle[radius=2pt];
		\draw [blue] (5,-1.5) node[left]{\small$1$};
		\draw (5,-1.5) -- (7,-0.85);
	
	\draw [fill=black] (7,-0.85) circle[radius=2pt];
	\draw [blue] (7,-0.85) node[above]{\small$0$};
	\draw [->] (7,-0.85) -- (8,-0.85);
	\draw (8,-0.8) node[above]{\small$x_2$};
	\draw [->] (7,-0.85) -- (7,-1.35);
	\draw (7,-1.35) node[right]{\small$x_1$};
	
	\draw[fill=blue] (5,-2.65) circle[radius=2pt];
	\draw[->,blue] (5,-2.65) -- (8,-2.65);
	\draw[blue] (7,-2.65) node{x};
	\draw [blue] (8,-2.65) node[above]{\small$H_2$};
	
	\draw (6.5,-3) node[below]{$\theta_2(1)$};
	\end{tikzpicture}
\end{tabu}
\end{equation*}
The exceptional divisor $E_2$ in $\Kup_{0,2}(\PP^2,2)_2$ is again a $\PP^1$ bundle over $Z(\sigma_2)_1$ and so the excess loci
\begin{equation*} F_2^1 \subseteq \Kup_{0,2}^{\max}(\PP^2|H_1,2)_2^{\operatorname{tot}}, \quad F_2^2(1) \subseteq \Kup_{0,2}^{\max}(\PP^2|H_2,2)_2^{\operatorname{tot}} \end{equation*}
are $\PP^1$ bundles over $Z(\tau_2)_1$ and $Z(\theta_2(1))_1$. Both these strata have codimension $1$, so both excess loci have excess dimension $0$, and in each case the excess class is simply the fundamental class of the excess locus. We conclude that the correction term in $\Kup_{0,2}(\PP^2,2)_2$ is the product:
\begin{equation*} [F_2^1] \cdot [F_2^2(1)].\end{equation*}
Denoting the blowup morphisms by
\begin{equation*} \Kup_{0,2}(\PP^2,2)_2 \xrightarrow{\rho_{2,1}} \Kup_{0,2}(\PP^2,2)_1 \xrightarrow{\rho_{1,0}} \Kup_{0,2}(\PP^2,2)\end{equation*}
we have from Theorem~\ref{thm: products comparison}:
\begin{equation*} [\Kup_{0,2}^{\max}(\PP^2|H_1,2)] \cdot [\Kup_{0,2}^{\max}(\PP^2|H_2,2)] = [\Kup_{0,2}^{\max}(\PP^2|H_1+H_2,2)] - (\rho_{1,0})_\star(\rho_{2,1})_\star  \left( [F_2^1] \cdot [F_2^2(1)] \right).\end{equation*}
It remains to calculate the final term. Let $i \colon E_2 \hookrightarrow \Kup_{0,2}(\PP^2,2)_2$ denote the inclusion and let $\pi \colon E_2 \to Z(\sigma_2)_1$ denote the bundle projection. We have
\begin{equation*} [F_2^1] = i_\star \pi^\star [Z(\tau_2)_1], \quad [F_2^2(1)] = i_\star \pi^\star [Z(\theta_2(1))_1]\end{equation*}
from which we obtain
\begin{equation*} [F_2^1] \cdot [F_2^2(1)] = i_\star \left( -H \cap \pi^\star ( [Z(\tau_2)_1] \cdot [Z(\theta_2(1))_1]  ) \right) \end{equation*}
where $H=-c_1(N_{E_2})$ is the fibrewise hyperplane class of the projective bundle. Using $\pi_\star H = 1$ and the projection formula we obtain
\begin{equation*} (\rho_{2,1})_\star \left( [F_2^1] \cdot [F_2^2(1)] \right) = - j_\star \left( [Z(\tau_2)_1] \cdot [Z(\theta_2(1))_1] \right)\end{equation*}
where $j \colon Z(\sigma_2)_1 \hookrightarrow \Kup_{0,2}(\PP^2,2)_1$ is the inclusion. The intersection $Z(\sigma_1) \cap Z(\sigma_2)$ is a divisor in $Z(\sigma_2)$, and consequently the strict transform $Z(\sigma_2)_1 \to Z(\sigma_2)$ is an isomorphism. We thus have
\begin{equation*} (\rho_{0,1})_\star (\rho_{2,1})_\star \left( [F_2^1] \cdot [F_2^2(1)] \right) = - k_\star \left( [Z(\tau_2)] \cdot [Z(\theta_2(1))] \right) \end{equation*}
where $k \colon Z(\sigma_2) \hookrightarrow \Kup_{0,2}(\PP^2,2)$ is the inclusion. The intersection of $Z(\tau_2)$ and $Z(\theta_2(1))$ inside $Z(\sigma_2)$ is transverse. We denote this locus by:
\begin{equation*} W = Z(\tau_2) \cap Z(\theta_2(1)) \subseteq Z(\sigma_2) \subseteq \Kup_{0,2}(\PP^2,2). \end{equation*}
Geometrically, it parametrises pairs of lines through the point $H_1 \cap H_2$ joined to a contracted component of the source curve containing both markings. It has dimension $3$, and we conclude:
\begin{equation} \label{eqn: correction formula example} [\Kup_{0,2}^{\max}(\PP^2|H_1,2)] \cdot [\Kup_{0,2}^{\max}(\PP^2|H_2,2)] = [\Kup_{0,2}^{\max}(\PP^2|H_1+H_2,2)] + [W].\end{equation}
This precisely quantifies the difference between the local/naive and logarithmic theories. In this case the formula reflects the geography of the naive space, which consists of two irreducible components corresponding to the two terms on the right-hand side. On the other hand, the iterated blowup procedure gives a general-purpose algorithm which does not rely on ad hoc descriptions of the naive space.

It is easy to find insertions which pair nontrivially with $[W]$. Introduce two additional markings with no tangency conditions and consider the forgetful morphism $F \colon \Kup_{0,4}(\PP^2,2) \to \Kup_{0,2}(\PP^2,2)$. Applying $F^\star$ to \eqref{eqn: correction formula example} gives:
\begin{equation*} [\Kup_{0,4}^{\max}(\PP^2|H_1,2)] \cdot [\Kup_{0,4}^{\max}(\PP^2|H_2,2)] = [\Kup_{0,4}^{\max}(\PP^2|H_1+H_2,2)] + F^\star[W].\end{equation*}
We cap this with the insertion $\gamma = \psi_1 \ev_3^\star (\pt) \ev_4^\star(\pt)$ on $\Kup_{0,4}(\PP^2,2)$. The correction term is
\begin{equation}\label{eqn: example correction term} \gamma \cap F^\star[W] = \psi_1 \cap [\Mbar_{0,4}] = 1\end{equation}
as the point constraints fix the moduli of the two lines. On the other hand the strong form of the local-logarithmic correspondence for smooth pairs gives:
\begin{align*} \gamma \cap [\Kup_{0,4}^{\max}(\PP^2|H_1,2)] \cdot [\Kup_{0,4}^{\max}(\PP^2|H_2,2)] & =  \gamma \cdot \ev_1^\star(H)  \ev_2^\star(H) \cap [\Kup_{0,4}(\OO_{\PP^2}(-1)^{\oplus 2},2)]^{\virt} \\
& = \ev_1^\star (H) \psi_1 \ev_2^\star (H) \ev_3^\star(\pt) \ev_4^\star(\pt)\, \mathrm{e}(\R^1 \pi_\star f^\star \OO_{\PP^2}(-1))^2 \cap [\Kup_{0,4}(\PP,2)]. \end{align*}
We compute this by torus localisation. Let $H_0,H_1,H_2 \subseteq \PP^2$ be the coordinate hyperplanes and $p_0,p_1,p_2 \in \PP^2$ be the coordinate points. We choose the following equivariant lifts of the insertions:
\begin{equation*} \ev_1^\star (H_0) \psi_1 \ev_2^\star (H_1) \ev_3^\star(p_0) \ev_4^\star(p_1). \end{equation*}
We equip the first copy of $\OO_{\PP^2}(-1)$ with the torus action which has weight zero at $p_0$, and equip the second copy of $\OO_{\PP^2}(-1)$ with the torus action which has weight zero at $p_1$. Under these choices of weights and equivariant insertions, the only graph contributing to the localised integral is
 \begin{equation*}
	\begin{tikzpicture}
		\draw [fill=black] (5,-0.25) circle[radius=2pt];
		\draw (5,-0.25) node[left]{\small$p_0$};
		\draw (5,-0.25) -- (5,0.25);
		\draw (5,0.2) node[above]{\small$x_3$};
		\draw (5,-0.25) -- (7,-0.85);
		\draw (6,-0.55) node[above]{\small$1$};
	
		\draw [fill=black] (5,-1.5) circle[radius=2pt];
		\draw (5,-1.5) node[left]{\small$p_1$};
		\draw (5,-1.5) -- (5,-2);
		\draw (5,-1.95) node[below]{\small$x_4$};
		\draw (5,-1.5) -- (7,-0.85);
		\draw (6,-1.2) node[below]{\small$1$};
	
	\draw [fill=black] (7,-0.85) circle[radius=2pt];
	\draw (7,-0.85) node[right]{\small$p_2$};
	\draw (7,-0.85) -- (7,-0.35);
	\draw (7,-0.4) node[above]{\small$x_1$};
	\draw (7,-0.85) -- (7,-1.35);
	\draw (7,-1.3) node[below]{\small$x_2$};

	\end{tikzpicture}
\end{equation*}
and a direct calculation of its contribution gives:
\begin{equation}\label{eqn: example naive invariant} \gamma \cap [\Kup_{0,4}^{\max}(\PP^2|H_1,2)] \cdot [\Kup_{0,4}^{\max}(\PP^2|H_2,2)] = 1. \end{equation}
Combining \eqref{eqn: example naive invariant} and \eqref{eqn: example correction term} with \eqref{eqn: correction formula example}, we obtain the logarithmic invariant:
\begin{equation*} \gamma \cap [\Kup_{0,4}^{\max}(\PP^2|H_1+H_2,2)] = 1-1 = 0.\end{equation*}
This value can be independently verified using heuristic arguments relating the logarithmic invariant to tropical curve counts.

\section{Virtual pullback} \label{sec: virtual pullback}

\noindent We employ virtual pullback techniques to extend Theorem~\ref{thm: products comparison} to general hyperplane sections. 

\subsection{Setup} Consider a pair $(Z|E)$ with $E=E_1+E_2$, and each $E_i$ a hyperplane section. We have embeddings
\begin{equation*} (Z|E_i)\hookrightarrow (\PP^{n_i}|H_i) \end{equation*}
where $H_i$ is a hyperplane. Let $X=\PP^{n_1}\times\PP^{n_2}$ and $D=D_1+D_2$ be the simple normal crossings divisor induced by the $H_i$. There is a closed embedding $Z \hookrightarrow X$ with $E_i = Z \cap D_i$.
\begin{lemma}
The following morphisms of moduli spaces are strict:
\begin{align*} \Kup_{0,2}(Z,\beta) & \to \Kup_{0,2}(X,\beta),\\
\Kup^{\max}_{0,2}(Z|E_i,\beta) & \to \Kup^{\max}_{0,2}(X|D_i,\beta), \ \ \textrm{for $i \in \{1,2\}$}, \\
\Kup^{\max}_{0,2}(Z|E,\beta) & \to \Kup^{\max}_{0,2}(X|D,\beta).\end{align*}
They carry relative perfect obstruction theories given (in every case) by
\begin{equation*} (\pi_\star f^\star \mathrm{N}_{Z|X})^\vee[1]\end{equation*}
and the induced virtual pullback morphism identifies the virtual fundamental classes.
\end{lemma}

\begin{proof} The maps $(Z|E_i) \to (X|D_i)$ and $(Z|E) \to (X|D)$ are strict closed embeddings, so the logarithmic normal bundle coincides with the ordinary normal bundle. The obstruction theories are both relative to the moduli space of maps to the Artin fan~\cite{AW}, so the claim follows from functoriality of virtual pullbacks \cite{Mano12}. The obstruction theory is perfect due to the convexity of $X$. \end{proof}

\begin{lemma} The following square is cartesian
\begin{center}
\begin{tikzcd}
\Kup^{\max}_{0,2}(Z|E,\beta) \ar[r,"j"] \ar[d] \ar[rd,phantom,"\square"] & \Kup^{\max}_{0,2}(X|D,\beta) \ar[d] \\
\Kup_{0,2}(Z,\beta) \ar[r,"i"] & \Kup_{0,2}(X,\beta)
\end{tikzcd}	
\end{center}
and satisfies:
\begin{equation*} [\Kup^{\max}_{0,2}(Z|E,\beta)]^{\virt} = i^![\Kup^{\max}_{0,2}(X|D,\beta)].
\end{equation*}
\noindent The analogous statements hold for $(Z|E_i) \to (X|D_i)$.
\end{lemma}
\begin{proof} Since $i$ is strict, the square is a cartesian diagram of ordinary stacks. Equality of virtual classes holds as the relative perfect obstruction theory for $i$ pulls back to give the relative perfect obstruction theory for $j$.
\end{proof}

\subsection{Virtual birational models} The blowups in \S \ref{sec: blowup formula} may now be pulled back. For $j\in\{1,\ldots,m\}$ we obtain virtual birational models:
\begin{center}
\begin{tikzcd}
	\Kup_{0,2}(Z,\beta)_j \ar[r] \ar[d] \ar[rd,phantom,"\square"] & \Kup_{0,2}(X,\beta)_j \ar[d] \\
	\Kup_{0,2}(Z,\beta) \ar[r,"i"] & \Kup_{0,2}(X,\beta).
\end{tikzcd}
\end{center}
Since $i \colon \Kup_{0,2}(Z,\beta) \to \Kup_{0,2}(X,\beta)$ is strict, the morphism
\begin{equation}\label{eqn: birational model Z} \Kup_{0,2}(Z,\beta)_j \to \Kup_{0,2}(Z,\beta)\end{equation}
is also a logarithmic modification. The space $\Kup_{0,2}(Z,\beta)_j$ carries a natural perfect obstruction theory, and the pushforward morphism \eqref{eqn: birational model Z} identifies the virtual classes \cite[\S 3.5]{R19}.

We similarly obtain virtual strict transforms:
\begin{equation*}
	\Kup_{0,2}^{\max}(Z|E_i,\beta)_m \to \Kup_{0,2}^{\max}(Z|E_i,\beta)_{m-1} \to  \cdots \to \Kup_{0,2}^{\max}(Z|E_i,\beta)_0 = \Kup_{0,2}^{\max}(Z|E_i,\beta)
\end{equation*}
 for $i\in\{1,2\}$, and a comparison of obstruction theories gives:
\begin{equation*} [\Kup_{0,2}^{\max}(Z|E_i,\beta)_j]^{\virt} = i^! [\Kup_{0,2}^{\max}(X|D_i,\beta)_j].\end{equation*}

\subsection{Corrected product formula} We first restrict to the case where $Z$ is convex, so that products in $\Kup_{0,2}(Z,\beta)_j$ are well-defined. Applying the virtual pullback $i^!$ to the blowup formulae \eqref{eqn: blowup formula D1} and \eqref{eqn: blowup formula D2} results in the following relations in $\Kup_{0,2}(Z,\beta)_j$:
\begin{align} \rho_{j,j-1}^\star [\Kup_{0,2}^{\max}(Z|E_1,\beta)_{j-1}]^{\virt} & = [\Kup_{0,2}^{\max}(Z|E_1,\beta)_{j}]^{\virt} + \gamma^1_j \cap [F_j^1]^{\virt}, \\
 \rho_{j,j-1}^\star [\Kup_{0,2}^{\max}(Z|E_2,\beta)_{j-1}]^{\virt} & = [\Kup_{0,2}^{\max}(Z|E_2,\beta)_{j}]^{\virt} + \Sigma_{k=1}^{l_j} \gamma^2_j(k) \cap [F_j^2(k)]^{\virt}.	
\end{align}
As in \S\S\ref{sec: excess loci} and \ref{sec: corrected correspondence}, we now pass up the tower of logarithmic blowups, take the product and then push back down to $\Kup_{0,2}(Z,\beta)$, obtaining:
\begin{theorem} \label{thm: correct correspondence general} The following relation holds in $\Kup_{0,2}(Z,\beta)$:
\begin{align*} [\Kup_{0,2}^{\max}(Z|E_1,\beta)]^{\virt} \cdot[\Kup_{0,2}^{\max}(Z|E_2,\beta)]^{\virt} = [\Kup_{0,2}^{\max}(Z|E,\beta)]^{\virt} - \sum_{j=1}^m (\rho_{j,0})_\star \left( \sum_{k=1}^{l_{j}} (\gamma^1_{j}\cap [F^1_{j}]^{\virt})\cdot (\gamma^2_j(k) \cap [F^2_{j}(k)]^{\virt})\right). \end{align*}
Applying $F_\star$ gives the corrected local-logarithmic correspondence.
\end{theorem}
If $Z$ is not convex the Chow groups of $\Kup_{0,2}(Z,\beta)$ need not admit a product, and the corrected product formula cannot even be formulated. Instead, we apply $i^!$ to Theorem~\ref{thm: products comparison} to obtain:
\begin{align*} i^! \left( [\Kup_{0,2}^{\max}(X|D_1,\beta)] \cdot[\Kup_{0,2}^{\max}(X|D_2,\beta)] \right) = [\Kup_{0,2}^{\max}(Z|E,\beta)]^{\virt} - i^! \sum_{j=1}^m (\rho_{j,0})_\star \left( \sum_{k=1}^{l_{j}} (\gamma^1_{j}\cap [F^1_{j}])\cdot (\gamma^2_j(k) \cap [F^2_{j}(k)]) \right). \end{align*}
The strong form of the local-logarithmic correspondence for $(X|D_1)$ and $(X|D_2)$ can be used to identify the left-hand side with the local theory of $\OO_Z(E_1)\oplus\OO_Z(E_2)$ capped with $\ev_1^\star E_1 \cdot \ev_2^\star E_2$. Applying $F_\star$ we again obtain the corrected local-logarithmic correspondence. A difference with Theorem \ref{thm: correct correspondence general} is that the correction terms are calculated in $\Kup_{0,2}(X,\beta)$ and then pulled back.

\section{Local-logarithmic on product geometries} \label{sec: product-section}
\noindent We establish an instance of the numerical local-logarithmic correspondence for products. The argument here is elementary, and independent of the more technical blowup arguments elsewhere in the paper. However the understanding of the blowup geometry leads very naturally to the proof; we simply look for situations where the correction terms can be shown to vanish.

\subsection{Setup: unobstructed case} 
Let $X_1,\ldots X_k$ be smooth projective varieties equipped with smooth hyperplane sections $D_1,\ldots, D_k$. As before, we first specialise to the situation where each $X_i$ is a projective space $\PP^{n_i}$ and each $D_i=H_i$ is a hyperplane. Let
\[
\PP \colonequals \prod_{i=1}^k \PP^{n_i}
\]
be the target, $H$ the union of pullbacks of hyperplanes $H_i$ from the factors, and $\beta$ the curve class. 

We work with the space of $k+3$ pointed maps. The final three points will be taken to have zero contact order and each of the first $k$ points will have maximal contact order with the corresponding divisor. We have the following composition of forgetful morphisms:
\[
\mathsf K^{\mathrm{max}}_{0,k+3}(\PP|H,\beta)\to \mathsf K^{\mathrm{max}}_{0,k+3}(\PP^{n_i}|H_i,\beta_i)\to \mathsf K^{\mathrm{max}}_{0,4}(\PP^{n_i}|H_i,\beta_i).
\]
The first arrow projects onto the appropriate factor and stabilises the map; the second arrow forgets all marked points except $x_i$ and the three markings with zero contact order. These give rise to a morphism
\[
\rho \colon \mathsf K^{\mathrm{max}}_{0,k+3}(\PP|H,\beta)\to \prod_{i=1}^k \mathsf K^{\mathrm{max}}_{0,4}(\PP^{n_i}|H_i,\beta_i).
\]

\begin{proposition}
The morphism $\rho$ is proper and birational. 
\end{proposition}

\begin{proof}
Arguments as in \S \ref{sec: counterexamples} show that the source and target of $\rho$ are irreducible. Examine the locus in the source comprising maps from smooth domains, dimensionally transverse to $H$. Notice that the source curve is a parametrised $\PP^1$ with the parametrisation given by the three markings with zero contact order; this locus is dense, and $\rho$ has an inverse on this locus.\end{proof}


\subsection{Primary theory with factorwise insertions} \label{sec: primary factorwise insertions} Consider the morphism
\begin{equation*} \nu \colon \Kup_{0,3}(\PP,\beta) \to \prod_{i=1}^k \Kup_{0,3}(\PP^{n_i},\beta_i) \end{equation*}
of spaces of ordinary stable maps. It is clear that $\nu$ is proper and birational. We assemble primary insertions on $\Kup_{0,3}(\PP,\beta)$ without appealing to the existence of marked points. This is likely well-known to experts. Consider the universal family:
\[
\begin{tikzcd}
\mathcal C\arrow{d}{\pi}\arrow{r}{f}& \PP\\
\mathsf \Kup_{0,3}(\PP,\beta).&
\end{tikzcd}
\]
Given a cohomology class $\gamma$ in the target $\PP$ we obtain a cycle class $\pi_\star f^\star\gamma$ on $\Kup_{0,3}(\PP,\beta)$. Primary invariants are defined by integrating products of such classes. The comparison of diagonals (see the proof of Lemma~\ref{lemma: local class}) equates these integrals with the ordinary Gromov--Witten invariants:
\begin{equation*} \Pi_{j=1}^r \pi_\star f^\star \gamma_j \cap [\Kup_{0,3}(\PP,\beta)] = \Pi_{j=1}^r \ev_j^\star \gamma_j \cap [\Kup_{0,r+3}(\PP,\beta)].\end{equation*}
The three auxiliary marked points can be removed by attaching divisorial insertions and appealing to the divisor axiom; alternatively, they may be equipped with arbitrary insertions.

We now restrict to a particular class of insertions: we require that each class $\gamma_j$ is equal to the pullback of a class in $\PP^{n_i}$ along one of the projections $\PP \to \PP^{n_i}$. We refer to this as the \textbf{primary theory with factorwise insertions}. The three auxiliary markings are allowed to carry arbitrary classes; the factorwise constraint only applies to additional markings introduced via the above procedure. We assemble these insertions into a single class on $\Kup_{0,3}(\PP,\beta)$:
\begin{equation} \label{eqn: factorwise insertion class}	 \gamma = \prod_{i=1}^3 \ev_i^\star \delta_i \cdot \prod_{j=1}^r \pi_\star f^\star \gamma_j.\end{equation}

\begin{theorem}[Local-logarithmic correspondence with primary factorwise insertions] \label{thm: product conjecture projective space} If $\gamma$ is the class \eqref{eqn: factorwise insertion class} with factorwise insertions, then there is an equality
\begin{equation*} \gamma \cap \psi_\star [\Kup_{0,k+3}^{\max}(\PP|H,\beta)] = \upalpha \cdot \gamma \cap [\Kup_{0,3}(\oplus_{i=1}^k \OO_{\PP}(-H_i),\beta)]^{\virt} \end{equation*}
where $\psi \colon \Kup_{0,k+3}^{\max}(\PP|H,\beta) \to \Kup_{0,3}(\PP,\beta)$ and $\upalpha = \Pi_{i=1}^k (-1)^{d_i+1} d_i$.
\end{theorem}

\begin{proof} There is a commutative diagram
\begin{center}
\begin{tikzcd}
 \mathsf K^{\mathrm{max}}_{0,k+3}(\PP|H,\beta) \ar[r,"\rho"] \ar[d,"\psi"] & \prod_{i=1}^k \mathsf K^{\mathrm{max}}_{0,4}(\PP^{n_i}|H_i,\beta_i) \ar[d,"\varphi"] \\
	\Kup_{0,3}(\PP,\beta) \ar[r,"\nu"] & \prod_{i=1}^k \Kup_{0,3}(\PP^{n_i},\beta_i)
\end{tikzcd}	
\end{center}
with $\rho$ and $\nu$ birational. Because $\gamma$ is assembled from primary factorise insertions, it follows that $\gamma=\nu^\star \delta$ for some class $\delta$. This can be seen by comparing the universal curve over $\Kup_{0,3}(\PP,\beta)$ to the pullback of the universal curve over $\Kup_{0,3}(\PP^{n_i},\beta_i)$.

The product formula~\cite{Beh97} applied to the total spaces of $\OO_{\PP^{n_i}}(-H_i)$ shows that the local class associated to $\oplus_{i=1}^k \OO_{\PP}(-H_i)$ pushes forward along $\nu$ to the product of the local classes associated to $\OO_{\PP^{n_i}}(-H_i)$. Combining with the local-logarithmic correspondence for the smooth pairs $(\PP^{n_i},H_i)$ gives:
\begin{align*} \varphi_{\star} \left( \Pi_{i=1}^k [\Kup_{0,4}^{\max}(\PP^{n_i}|H_i,\beta_i)] \right) = \upalpha \cdot \Pi_{i=1}^k [\Kup_{0,3}(\OO_{\PP^{n_i}}(-H_i),\beta_i)]^{\virt} = \upalpha \cdot \nu_{\star} [\Kup_{0,3}(\oplus_{i=1}^k \OO_{\PP}(-H_i),\beta)]^{\virt}.\end{align*}
From this, and the projection formula applied to $\nu$, we conclude:
\begin{align*} \gamma \cap \psi_\star [\Kup_{0,k+3}^{\max}(\PP|H,\beta)] & = \delta \cap \varphi_\star \rho_\star [\Kup_{0,k+3}^{\max}(\PP|H,\beta)] \\
& = \delta \cap \varphi_\star \left( \Pi_{i=1}^k [\Kup_{0,4}^{\max}(\PP^{n_i}|H_i,\beta_i)] \right) \\
& = \upalpha \cdot \delta \cap \nu_\star [\Kup_{0,3}(\oplus_{i=1}^k  \OO_\PP(-H_i),\beta)]^{\virt} \\
& = \upalpha \cdot \gamma \cap [\Kup_{0,3}(\oplus_{i=1}^k \OO_\PP(-H_i),\beta)]^{\virt}. \qedhere \end{align*}\end{proof}

\begin{remark} The above result does not contradict Theorem~\ref{thm: products comparison}. Rather, the factorwise insertions kill the correction terms in this setting. The same phenomenon explains other cases where numerical forms of the local-logarithmic correspondence are known to hold \cite{BBvG,BBvG2}.\end{remark}

\subsection{Virtual pullback} In light of the preceding result, note that for an arbitrary section pair $(X|D)$ of product type, an identical construction produces a cartesian diagram:
\[
\begin{tikzcd}
\mathsf K^{\mathrm{max}}_{0,k+3}(X|D,\beta)\arrow{d}\arrow{r} \ar[rd,phantom,"\square"]& \mathsf K^{\mathrm{max}}_{0,k+3}(\PP|H,\beta) \arrow{d} \\
\mathsf \prod_{i=1}^k \Kup_{0,4}^{\max}(X_i|D_i,\beta_i) \arrow{r} & \mathsf \prod_{i=1}^k \Kup_{0,4}^{\max}(\PP^{n_i}|H_i,\beta_i). 
\end{tikzcd}
\]
The horizontal arrows possess compatible perfect obstruction theories, as in \S \ref{sec: virtual pullback}. The right vertical map is birational, therefore we conclude that the left vertical arrow identifies virtual classes. The proof of Theorem~\ref{thm: product conjecture projective space} then applies verbatim, extending the correspondence to section pairs: 
\begin{equation*}\gamma \cap \psi_\star [\Kup_{0,k+3}^{\max}(X|D,\beta)]^{\virt} = \upalpha \cdot \gamma \cap [\Kup_{0,3}(\oplus_{i=1}^k \OO_{X}(-D_i),\beta)]^{\virt}.\end{equation*}

\begin{remark}
We do not believe that the product structure is the true reason for the result; products only produce a birational morphism that kills the corrections. The morphism 
\begin{equation*}\Kup^{\max}_{0,k+3}(\PP|H,\beta) \to \prod_{i=1}^k \Kup^{\max}_{0,4}(\PP^{n_i},\beta_i)\end{equation*}
is a contraction. On the right-hand side there is not necessarily a universal map to $\PP$. This is analogous to the quasimap moduli. A study of naive and logarithmic quasimap theory may be worthwhile.\end{remark}

\bibliographystyle{siam} 
\bibliography{Bibliography} 

\end{document}